\newtheorem{theorem}{Theorem}[section]
\newtheorem{definition}[theorem]{Definition}
\numberwithin{equation}{section}
\newtheorem{lemma}[theorem]{Lemma}
\newtheorem{proposition}[theorem]{Proposition}
\newtheorem{corollary}[theorem]{Corollary}
\newtheorem{remark}[theorem]{Remark}
\newtheorem{example}[theorem]{Example}
\numberwithin{equation}{section}
\def\Q{\mathbb{Q}}
\def\R{\mathbb{R}}
\def\P{\mathbb{P}}
\def\E{\mathbb{E}}
\renewcommand{\phi}{\varphi}
\renewcommand{\epsilon}{\varepsilon}
\newcommand{\1}{{\text{\Large $\mathfrak 1$}}}
\renewcommand{\emptyset}{\varnothing}
\newcommand\be{\begin{equation}}
\newcommand\ee{\end{equation}}
\def\p{\Psi}
\begin{document}

\title{\bf Brownian motion with variable drift: 0-1 laws, hitting probabilities and Hausdorff dimension}

\author{
Yuval Peres\thanks{Microsoft Research, Redmond, Washington, USA; peres@microsoft.com} \and Perla Sousi\thanks{University of Cambridge, Cambridge, UK;   p.sousi@statslab.cam.ac.uk}
}
\date{}
\maketitle

\begin{abstract}
By the Cameron--Martin theorem, if a function $f$ is in the Dirichlet space $D$, then $B+f$ has the same a.s.\ properties as standard Brownian motion, $B$. In this paper we examine properties of $B+f$ when $f \notin D$.
We start by establishing a general 0-1 law, which in particular implies that for any fixed $f$, the Hausdorff dimension of the image and the graph of $B+f$ are constants a.s. (This 0-1 law applies to any L\'evy process.) Then we show that if the function $f$ is H\"older$(\frac{1}{2})$, then $B+f$ is {\em intersection equivalent\/} to $B$. Moreover, $B+f$  has double points a.s.\ in dimensions $d\le 3$, while in $d\ge 4$ it does not. We also give examples of functions which are H\"older with exponent less than $\frac{1}{2}$, that yield double points in dimensions greater than 4. Finally, we show that for $d \ge 2$,
the Hausdorff dimension of the image of $B+f$ is a.s.\ at least the maximum of 2 and the dimension of the image of $f$.
\newline
\newline
\emph{Keywords and phrases.} Brownian motion, Hausdorff dimension, 0-1 laws, multiple points, intersection equivalence, polar sets.
\newline
MSC 2010 \emph{subject classifications.} Primary  60J65 ;  
Secondary 60F20  , 60J45.    
\end{abstract}

\section{Introduction}
By the Cameron--Martin theorem, if a function $f$ is in the Dirichlet space,
\[
D[0,1] = \left\{ F \in C[0,1]: \exists f \in \mathbf{L}^2[0,1] \text{ such that } F(t) = \int_{0}^{t} f(s) ds, \forall t \in [0,1] \right\},
\]
then $B+f$ has the same a.s.\ properties as standard Brownian motion, $B$.
When $f$ is not in this space, then the laws of $B$ and $B+f$  are singular, i.e., there exists a
Borel set $A$ in $C[0,1]$ such that $\mathbb{L}_{B}(A)=0$ and $\mathbb{L}_{B+f}(A^c)=0$, where $\mathbb{L}_{B}$ and  $\mathbb{L}_{B+f}$ are the laws of $B$ and $B+f$, respectively. Thus, if $f \notin D[0,1]$, then there   exists an almost sure property of $B$ which is not an almost sure property of $B+f$.

In this paper we consider functions $f \notin D[0,1]$, and investigate whether some specific a.s.\ properties of Brownian motion transfer also to $B+f$.

In Section \ref{01} we establish  a   zero-one law for  random set functions, Theorem \ref{0-1}.
Here we state some  special cases of this law. For a function $f: [0,1] \to \R^d$, denote its graph restricted to $A \subset [0,1]$ by
\[
G(f)(A) = \{(t,f(t)): t \in A \}.
\]
When $A=[0,1]$, we abbreviate $G(f)([0,1])$ as $G(f)$.

For concreteness, we state the following theorem for Brownian motion. Parts (\textbf{a}) and (\textbf{c}) apply to any c\`adl\`ag process with independent increments.

\begin{theorem}\label{bmdrift}
Let $(B_t, 0\le t \le 1)$ be a standard Brownian motion in $\R^d$ and let $f:[0,1] \to \R^d$ be a continuous function.
\begin{description}
  \item[(a)] Let $A$ be a closed set in $[0,1]$. Then $\P(\mathcal{L}(B+f)(A)>0) \in \{0,1\}$,
           where $\mathcal{L}$ stands for Lebesgue measure and $(B+f)(A)$ is the image of $A$ under $B+f$.
  \item[(b)] Let $A\subset [0,1]$ be closed. Then $\P(((B+f)(A))^\circ \neq \emptyset) \in \{0,1\}$, where
           $D^\circ$ stands for the interior of $D$.
  \item[(c)] $\dim (B+f)[0,1]=c$ a.s.\ and  $\dim G(B+f)[0,1]=c'$ a.s.\, where $c$  and $c'$ are two positive constants and $\dim$ is the Hausdorff dimension.
\end{description}
\end{theorem}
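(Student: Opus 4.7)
All three parts are stated as special cases of the general 0-1 law for random set functions, Theorem \ref{0-1}. My plan is to identify, for each part, the appropriate random set function $\phi$ on closed subsets of $[0,1]$ and verify the hypotheses of that theorem. Concretely: for (a) take $\phi(A)=\mathcal{L}((B+f)(A))$; for (b), $\phi(A)=\mathbf{1}\{((B+f)(A))^\circ\neq\emptyset\}$; for (c), $\phi(A)=\dim(B+f)(A)$ (image) and $\phi(A)=\dim G(B+f)(A)$ (graph).

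Two structural properties of $\phi$ need to be checked in each case. The first is compatibility with countable closed unions: sub-additivity for Lebesgue measure, countable stability for Hausdorff dimension, and a Baire-category argument for the interior --- writing $A=\bigcup_n A_n$ with $A_n$ compact and $A_n\uparrow A$, if each compact $(B+f)(A_n)$ has empty interior, then $(B+f)(A)$ is compact and meager in $\R^d$, hence has empty interior. The second is translation invariance of $\phi$ in the target $\R^d$ (and in the $\R^d$-fibre of $\R\times\R^d$ for the graph), which is immediate for $\mathcal{L}$, interior, and $\dim$. These properties combine with the independent-increments structure of $B$ as follows: a time-split at $s\in(0,1)$ gives
\[
(B_t+f(t))_{t\ge s}=B_s+\bigl((B_t-B_s)+f(t)\bigr)_{t\ge s},
\]
where the bracketed process is independent of $\sigma(B_u:u\le s)$ and the additive $B_s$ is absorbed by translation invariance of $\phi$, so $\phi(A\cap[s,1])$ is independent of the past. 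This is the structure that Theorem \ref{0-1} leverages to force $\phi(A)$ to be a.s.\ constant.

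The main obstacle is part (b): the Baire-category step requires $B$ continuous so that $(B+f)(A_n)$ is compact, which is why (b) is restricted to Brownian motion in the theorem statement while (a) and (c) extend to any c\`adl\`ag process with independent increments. Positivity of $c,c'$ in (c) is not part of the 0-1 law but is a separate easy observation: since $B+f$ is continuous and a.s.\ nonconstant on $[0,1]$, the image $(B+f)[0,1]$ is a nondegenerate continuum in $\R^d$ so $c\ge1$, and the graph projects onto $[0,1]$ via the time coordinate, giving $c'\ge 1$.
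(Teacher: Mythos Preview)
Your plan matches the paper's proof: in each case one applies Theorem~\ref{0-1} to the natural set function, with the independence hypothesis coming from independent increments of $B$ together with translation invariance of $\phi$ in $\R^d$, and with condition~\ref{subadditive} verified by subadditivity of $\mathcal{L}$, the Baire category argument, and countable stability of $\dim$, respectively.

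There is, however, a real gap in part~(c). The conclusion of Theorem~\ref{0-1} is only that $\P\bigl(\Psi([0,1])>0\bigr)\in\{0,1\}$; it does not assert that $\Psi([0,1])$ is a.s.\ constant. With your choice $\phi(I)=\dim(B+f)(I)$ a single application yields merely $\P\bigl(\dim(B+f)[0,1]>0\bigr)\in\{0,1\}$, which says nothing about a.s.\ constancy of the dimension. The paper instead applies Theorem~\ref{0-1} to, for each fixed $c>0$, the indicator $\Psi_c(I)=\1\bigl(\dim(B+f)(I)>c\bigr)$. Countable stability of Hausdorff dimension still verifies condition~\ref{subadditive} (if $\dim(B+f)(A_j)\le c$ for all $j$ then $\dim(B+f)\bigl(\bigcup_j A_j\bigr)=\sup_j\dim(B+f)(A_j)\le c$), and now the conclusion reads $\P\bigl(\dim(B+f)[0,1]>c\bigr)\in\{0,1\}$ for every $c$, forcing the distribution of $\dim(B+f)[0,1]$ to be a point mass. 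The same thresholding handles the graph. A minor related point: in (a) and (b) the statement concerns a fixed closed $A\subset[0,1]$, so the set function fed to Theorem~\ref{0-1} should be $\Psi(I)=\phi(A\cap I)$, as the paper does explicitly.
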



\begin{remark}
\rm{
We know that in dimensions $d \geq 2$, Brownian motion without drift has 0 volume. A natural question that arises is whether there exist functions $f$ such that the Lebesgue measure of $(B+f)[0,1]$ is positive. Such functions have been constructed
by Graversen \cite{Graversen}. For all $d\geq 2$ and all $\alpha< \frac{1}{d}$, Antunovi\'c, Peres and Vermesi \cite{Tonci} construct an $\alpha$-H\"older continuous function, $f$, which makes the image of $B+f$ cover an open set a.s. Related results for L\'evy processes are in \cite{SE}.
}
\end{remark}

In Section \ref{counterex} we discuss an example of an event which seems similar to those in Theorem~\ref{bmdrift}, yet does not follow a 0-1 law. This example answers a question of Itai Benjamini (personal communication).

Before stating the other results we obtain in this paper, we first recall some definitions.

\begin{definition}\rm{
Let $X = (X_t, t\geq 0)$ be a stochastic process.
A closed set $A\subset \R^d$ is called \textbf{nonpolar} for $X$, if $\P_x(X \text{ hits } A)>0$, for all $x$. Otherwise, it is called \textbf{polar}.
}
\end{definition}

\begin{definition}
\rm{
Let $B$ be a standard Brownian motion in 2 dimensions. We denote by $B^{\lambda}$, the Brownian motion killed after an independent Exponential time of parameter $\lambda$.
}
\end{definition}

In Section \ref{hitting} we obtain results about the hitting probabilities of $B+f$, when $f$ is a H\"older$\left(\frac{1}{2}\right)$ continuous function. We recall the definition of H\"older($\alpha$) continuous function:
\[
\exists K: \forall x,y \quad |f(x)-f(y)|\leq K|x-y|^\alpha.
\]
We call $K$ the H\"older($\alpha$) constant of $f$.
We will prove the following theorem:

\begin{theorem}\label{interequi}
Let $(B_t)_t$ be a standard Brownian motion in $d\geq 2$ dimensions and $f$ a H\"older$\left(\frac{1}{2}\right)$ continuous function $f: \R_+ \to \R^d$ with H\"older constant $K$.
\begin{description}
\item[(a)]
\quad If $d \geq 3$, then $B+f$ and $B$ are intersection equivalent, in the sense that
there exist positive constants $c_1$ and $c_2$ depending only on $d$ and $K$ such that for all $x \in \R^d$ and all closed sets $A\subset \R^d$, we have
\[
c_1 \P_x(B \text{\rm{ hits }} A) \leq \P_x(B+f \text{\rm{ hits }} A) \leq c_2 \P_x(B \text{\rm{ hits }} A),
\]
where the notation $\P_x(B+f \text{\rm{ hits }} A)$ means that $B_0+f(0)=x$.
In particular, if a closed set $\Lambda$ is polar for $B$, then it is also polar for $B+f$.
\item[(b)] \quad If $d=2$, then for any bounded open set $U$, there exist positive constants $c_1$ and $c_2$ depending on $K$, on $U$ and $\lambda$ such that for all $x \in U$  and all closed sets $A \subset U$, we have that
    \[
    c_1 \P_x(B^{\lambda} \text{\rm{ hits }} A) \leq \P_x((B+f)^{\lambda} \text{\rm{ hits }} A) \leq c_2 \P_x(B^{\lambda} \text{\rm{ hits }} A).
    \]
Also if a closed set $A$ is nonpolar for $B$, then it is also nonpolar for $B+f$ and
$\P_x(B+f \text{\rm{ hits }} A) = 1$, for all $x$. Finally, if $A$, a closed set, is polar for $B$, then it is also polar for $B+f$.
\end{description}
\end{theorem}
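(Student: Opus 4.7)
The central object in both parts is the effective potential kernel
\[
G^{B+f}(x,y) := \int_0^\infty (2\pi t)^{-d/2}\exp\!\Bigl(-\frac{|y-x-(f(t)-f(0))|^2}{2t}\Bigr)\,dt,
\]
together with its exponentially killed analogue $G^{(B+f)^\lambda}$ (insert $e^{-\lambda t}$) used for $d=2$. My first step is to prove that $G^{B+f}(x,y)\asymp|y-x|^{2-d}$ with constants depending only on $d$ and $K$, and, in the two-dimensional killed case, that $G^{(B+f)^\lambda}$ is comparable on any bounded open $U$ to the standard killed Green's function of $B$. Both estimates follow by splitting the time integral at $t\sim|y-x|^2/K^2$: for smaller $t$ the drift satisfies $|f(t)-f(0)|\leq K\sqrt t\lesssim|y-x|$, which pinches the Gaussian exponent between constant multiples of $|y-x|^2/t$, while for larger $t$ the prefactor $t^{-d/2}$ controls the remaining integral. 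A free by-product, crucial for the second moment computation, is that the time-shifted kernel $\int_0^\infty(2\pi u)^{-d/2}\exp(-|z-y-(f(s+u)-f(s))|^2/(2u))\,du$ obeys the same bounds \emph{uniformly in} $s\geq 0$, since the Hölder constant is shift-invariant.

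With the kernel estimates in hand I would run the classical capacity / second moment argument, adapted to the non-Markov process $B+f$. For a closed set $A$ and a capacity-optimal probability measure $\mu$ on $A$, consider the smeared occupation
\[
Z_\epsilon := \int_0^\infty\int_A \delta_\epsilon((B+f)_t-y)\,\mu(dy)\,dt
\]
(killed by $e^{-\lambda t}$ when $d=2$). The first moment is $\E Z_\epsilon\asymp\int G(x,y)\,d\mu(y)$; for the second moment, a Markov split at the earlier time $s$ combined with the $s$-uniform kernel bound gives $\E Z_\epsilon^2\lesssim\iint G(x,y)G(y,z)\,d\mu(y)\,d\mu(z)$. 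Paley--Zygmund then yields the lower bound on the probability that $B+f$ enters the $\epsilon$-neighborhood of $A$; the matching upper bound follows from the strong Markov property at the first hit together with a maximum-principle lower bound on $\int G(y,z)\,d\mu(z)$ at $y\in A$. Applying the very same bounds to $B$ itself gives the reverse comparison, yielding part~(a) and the first assertion of part~(b). The polar-set statement in part~(b) is then an immediate corollary: if $\mathrm{Cap}(A)=0$ then $\P_x(B^\lambda \text{ hits } A)=0$ for every $\lambda>0$, hence $\P_x((B+f)^\lambda \text{ hits } A)=0$ for every $\lambda>0$, and letting $\lambda\downarrow 0$ (monotone convergence) gives $\P_x(B+f\text{ hits }A)=0$.

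The main obstacle is the claim $\P_x(B+f\text{ hits }A)=1$ for nonpolar $A$ in $d=2$, since the killed intersection equivalence only delivers positive probability. My plan is a strong-Markov iteration paired with a 0-1 law. At any stopping time $\tau$, the shifted process takes the form $\tilde B+\tilde f$ with $\tilde f(u)=f(\tau+u)-f(\tau)$ Hölder-$(1/2)$ with the same constant $K$, so the uniform capacity bound yields a positive per-attempt success probability from any starting point in a fixed ball. The difficulty is that $B+f$ need not be recurrent to any fixed ball---an explicit drift such as $f(t)=Kv\sqrt t$ can push the process to infinity---so naive iteration is unavailable. I would therefore appeal to the general 0-1 law of Section~\ref{01} applied to the event $\{B+f\text{ hits }A\}$, reducing the claim to the positive-probability assertion already proved; failing that, one could try to choose the iteration balls adaptively to track the typical size $|f(t)|\leq|f(0)|+K\sqrt t$ of the drift and exploit the scale invariance of the bound on $G^{B+f}$ to manufacture infinitely many essentially independent attempts along a carefully chosen sequence of times.
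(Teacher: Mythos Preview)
Your Green-kernel comparison and the capacity/second-moment derivation of the two-sided hitting bounds are essentially the paper's argument: the paper proves Proposition~\ref{equivgreen} by the same time-splitting, then invokes the Martin-capacity bounds of \cite{BPP,MP} and adapts the first-passage decomposition (your ``strong Markov at the first hit'') using the fact that the post-$\tau$ drift $f(\tau+\cdot)-f(\tau)$ has the same H\"older constant. So part~(a) and the first assertion of~(b) are fine and match the paper.

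The gap is in the probability-$1$ claim, and it is precisely where you hesitate. Your suspicion that $B+f$ may fail to be neighborhood recurrent is unfounded, and your example $f(t)=Kv\sqrt t$ does \emph{not} push the process to infinity: since $|f(t)-f(0)|\le K\sqrt t$ is of the same order as the Brownian spread, one has $\P_w(|B_t+f(t)|\le 1)\asymp t^{-1}$ for $t\ge|w|^2$, with constants depending only on $K$. A second-moment computation on $T=\int_n^{n^2}\1(|B_t+f(t)|\le 1)\,dt$ (the same Paley--Zygmund machinery you already set up) gives $\E T\asymp\log n$ and $\E T^2\lesssim(\log n)^2$, hence $\P(T>0)\ge c(K)>0$ uniformly in $n$. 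As $\{B+f\text{ visits }D\text{ after time }n\text{ for all }n\}$ is in the Brownian tail $\sigma$-field, this yields neighborhood recurrence (paper's Lemma~\ref{lem:neighbrec}). With recurrence in hand, the strong-Markov iteration you sketched goes through verbatim and, combined with the tail $0$-$1$ law for $\{B+f\text{ hits }A\text{ i.o.}\}$, gives probability~$1$.

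By contrast, your proposed fallback to the $0$-$1$ law of Section~\ref{01} does not work: Theorem~\ref{0-1} requires $\Psi(I_1),\dots,\Psi(I_j)$ to be independent for disjoint intervals, but $\Psi(I)=\1\bigl((B+f)(I)\cap A\neq\emptyset\bigr)$ depends on the absolute position $B_a+f(a)$ at the left endpoint of $I$, not merely on the increments over $I$, so independence fails. Nor is $\{B+f\text{ hits }A\}$ itself a tail or germ event. The missing ingredient is exactly the recurrence lemma; once you notice that the H\"older-$(1/2)$ drift is no larger than Brownian fluctuations, it falls out of the tools you already have.
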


Next, in Section \ref{double}, using the ``intersection equivalence'' given in Theorem~\ref{interequi}, we show that when the function $f$ is again a H\"older($\frac{1}{2}$) continuous function, then  double points for $B+f$ exist only in dimension $d \leq 3$, just like in the Brownian motion case. We will prove the following theorem:

\begin{theorem} \label{doublepoints}
Let $(B_t, 0 \le t \le 1)$ be a standard Brownian motion in $d$ dimensions and let $f$ be a H\"older$\left(\frac{1}{2}\right)$ function.
\begin{description}
  \item[(a)] If $d\le 3$, then, almost surely, $B+f$ has double points.
  \item[(b)] If $d\ge 4$, then, almost surely, $B+f$ has no double points.
\end{description}
\end{theorem}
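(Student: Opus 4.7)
My plan for part~(\textbf{b}) ($d\geq 4$) is to fix two rationals $0\leq r_1<r_2\leq 1$ and show
\[
\P\bigl((B+f)([0,r_1])\cap(B+f)([r_2,1])\neq\emptyset\bigr)=0,
\]
and then conclude by a countable union bound, since any two distinct times in $[0,1]$ can be separated by such a pair of rationals. The argument reduces the claim to the classical Dvoretzky--Erd\H os--Kakutani fact that two independent Brownian paths in $\R^d$, $d\geq 4$, have ranges disjoint at positive times. Applying the Markov property at $r_2$, I would write $W_u=B_{r_2+u}-B_{r_2}$ (a Brownian motion independent of $\FF_{r_2}$), $g(u)=f(r_2+u)-f(r_2)$ (extended to $\R_+$ as H\"older$(\tfrac12)$ with the same constant), and set $\Lambda=(B+f)([0,r_1])-B_{r_2}-f(r_2)$, which is $\FF_{r_2}$-measurable. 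The intersection event is contained in $\{W+g\text{ ever hits }\Lambda\}$, so Theorem~\ref{interequi}(\textbf{a}), applied conditionally on $\FF_{r_2}$, bounds its probability by $c_2\,\P_0(W\text{ hits }\Lambda\mid\FF_{r_2})$. Next, a time reversal of $B$ at $r_2$ (setting $\tilde B_u=B_{r_2}-B_{r_2-u}$) exhibits $\Lambda$ as the range of a process $\tilde B+\tilde h$, where $\tilde B$ is a Brownian motion independent of $W$ and $\tilde h$ is H\"older$(\tfrac12)$. Conditioning on $W$ and approximating $W([0,\infty))=\bigcup_n W([0,n])$, a second application of Theorem~\ref{interequi}(\textbf{a}) bounds the inner probability by $c_2\,\P_0(\tilde B\text{ hits }W([0,n])\mid W)$; letting $n\to\infty$ leaves at most $c_2^2$ times the probability that the independent Brownian motions $\tilde B$ and $W$ meet at strictly positive times, which vanishes for $d\geq 4$.

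For part~(\textbf{a}) ($d\leq 3$), the case $d=1$ is immediate from the intermediate value theorem applied to the continuous path $B+f$. For $d=2,3$ I would use a second moment argument and then upgrade from positive probability to one via Brownian scaling and Blumenthal's zero-one law. With $p_\epsilon(x)=(2\pi\epsilon^2)^{-d/2}e^{-|x|^2/(2\epsilon^2)}$ and a fixed small $\delta>0$, set
\[
M_\epsilon=\int_0^1\int_0^1\1_{\{t-s>\delta\}}\,p_\epsilon\bigl((B+f)_t-(B+f)_s\bigr)\,ds\,dt.
\]
Since $(B+f)_t-(B+f)_s$ is Gaussian with mean $f(t)-f(s)$ satisfying $|f(t)-f(s)|\leq K(t-s)^{1/2}$ and covariance $(t-s)I_d$, the mean-shift factor in the Gaussian density at $0$ is bounded below by $e^{-K^2/2}$, so $\E[M_\epsilon]\gtrsim_K\int_\delta^1(u+\epsilon^2)^{-d/2}\,du$, which diverges as $\epsilon\to 0$ for $d\geq 2$. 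The second moment inequality $\E[M_\epsilon^2]\leq C(\E[M_\epsilon])^2$ follows because the mean-shift factors are at most $1$, so $\E[M_\epsilon^2]$ is dominated by its drift-free analogue, controlled by the classical Brownian second-moment computation valid for $d\leq 3$, and the first-moment loss is absorbed in $K$-dependent constants. Paley--Zygmund then gives $\P(M_\epsilon>0)\geq c(K,d)>0$ uniformly in $\epsilon$, and a compactness argument yields a double point of $B+f$ in $[0,1]$ with probability at least $c$. The Brownian scaling $\tilde B_s=\eta^{-1/2}B_{\eta s}$, $\tilde f(s)=\eta^{-1/2}f(\eta s)$ preserves the H\"older$(\tfrac12)$ constant $K$, so the same lower bound holds for double points in $[0,\eta]$ for every $\eta>0$. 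The event that $B+f$ has a double point in every neighbourhood of $0$ lies in $\FF^B_{0+}$ (the filtration of $B+f$ coincides with that of $B$ because $f$ is deterministic), so Blumenthal's zero-one law upgrades its probability to one, whence double points in $[0,1]$ almost surely.

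The main obstacle I foresee is the bookkeeping in part~(\textbf{b}): because $W$ and the reversed $\tilde B$ both start at the origin, Theorem~\ref{interequi}(\textbf{a}) and the Dvoretzky--Erd\H os--Kakutani input must be read as hitting (respectively meeting) at \emph{strictly positive} times, and one must check separately that the final intersection probability between independent Brownian motions from the same origin vanishes away from $t=0$ in $d\geq 4$. The second moment estimate in part~(\textbf{a}) is robust because the deterministic drift $f$ only shifts Gaussian means by amounts bounded on bounded intervals while leaving covariances intact, so the drift-free calculation transfers with constants depending only on $K$ and $d$.
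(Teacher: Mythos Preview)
Your part~(\textbf{b}) is essentially the paper's argument. The paper packages the reduction as a separate result (Theorem~\ref{inters}): two independent processes $B_i+f_i$ do not intersect in $d\ge 4$, proved by conditioning on one range and applying Theorem~\ref{interequi} (once) together with the classical fact that $\E[\mathrm{Cap}_M(B_2[0,\infty))]=0$. It then splits the single path at one rational $\alpha$ rather than a pair $r_1<r_2$; this is slightly cleaner because both the forward and the time-reversed pieces start at the origin, so the ``strictly positive times'' bookkeeping you flag is automatic.

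Your part~(\textbf{a}) is a genuinely different route. The paper again goes through Theorem~\ref{interequi}: splitting at $t=1/2$ gives two independent processes of the form $B_i+f_i$; intersection equivalence with driftless Brownian motion yields a positive lower bound $c(K)$ on the intersection probability, and a Markov/scaling argument ($q\le q^2$, hence $q=0$) upgrades this to probability one. Your direct second-moment computation avoids Theorem~\ref{interequi} altogether, using only that the drift shifts Gaussian means without changing covariances (so the second moment is dominated by the driftless one via Anderson's inequality, while the first moment drops by at most $e^{-K^2/2}$), and your Blumenthal upgrade is a clean alternative to the $q\le q^2$ trick. This is a legitimate and more self-contained approach.

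Two imprecisions in your write-up of (\textbf{a}) are worth fixing. First, $\E[M_\epsilon]$ does \emph{not} diverge for $d=2,3$ once $\delta>0$ is fixed; it converges to a finite positive limit. Divergence is irrelevant---what Paley--Zygmund needs is that the ratio $(\E M_\epsilon)^2/\E[M_\epsilon^2]$ stays bounded below, which your moment comparisons do give. Second, since $p_\epsilon>0$ everywhere, the event $\{M_\epsilon>0\}$ is trivial; you should either record $\P(M_\epsilon>\theta\,\E M_\epsilon)\ge c(K,d)$ and then argue that on $\{M_{\epsilon_n}>\theta\,\E M_{\epsilon_n}\text{ i.o.}\}$ (probability $\ge c$) one has $\inf_{t-s\ge\delta}|(B+f)_t-(B+f)_s|=0$ and hence a double point by compactness, or replace $p_\epsilon$ by the indicator $\1_{|\,\cdot\,|<\epsilon}$ so that $\{I_\epsilon>0\}$ is already the existence of an $\epsilon$-near-collision.
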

In Theorem \ref{fractional} we show that in dimension $d \ge 4$, there exist $f \notin$ H\"older($\frac{1}{2}$) such that $B+f$ has double points.

\begin{remark}\rm{
1. The space of H\"older($\alpha$) continuous functions functions is much larger than the Dirichlet space, $D[0,1]$. Indeed, for any $\alpha \in (0,1)$, most H\"older($\alpha$) continuous functions (in the sense of Baire category) are nowhere differentiable.
\newline
2. The value $\alpha=1/2$ is also the critical H\"older exponent for other properties of Brownian motion with drift, such as positive area in 2 dimensions (see \cite{Graversen}) and isolated zeros in 1 dimension (see \cite{ABP}).
}
\end{remark}

Finally, in Section \ref{hausd}, we study the Hausdorff dimension of the image and graph of $B+f$, when $f$ is a continuous function.

From Theorem \ref{bmdrift} (part c), we have that the Hausdorff dimension of the image and the graph are constants a.s. In Section \ref{hausd} we obtain upper and lower bounds for these constants.

Recall from McKean's theorem (see for instance \cite[Theorem~4.33]{MP}) that almost surely
\[
\dim B(A) = (2\dim A) \wedge d.
\]
In the following Theorem we show that adding a continuous drift cannot decrease the Hausdorff dimension of the image.
\begin{theorem}
\label{upperbound}
Let $f: [0,1] \to \R^d$, $d \geq 1$, be a continuous function and let $(B_t, 0 \le t \le 1)$ be a standard Brownian motion in $d$ dimensions. Let $A$ be a closed subset of $[0,1]$. Then
\[
\dim (B+f)(A) \geq \max\{(2\dim A) \wedge d , \dim f(A)\}.
\]
\end{theorem}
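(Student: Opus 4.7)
The plan is to establish the two lower bounds in the maximum separately, both via the Frostman energy method applied to a suitable random pushforward on $(B+f)(A)$. Recall that to prove $\dim E\ge\alpha$ it suffices to exhibit a Borel probability measure on $E$ whose $\alpha$-energy $I_\alpha(\rho)=\iint|u-v|^{-\alpha}\,d\rho(u)\,d\rho(v)$ is finite.

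For the bound $\dim(B+f)(A)\ge(2\dim A)\wedge d$, fix $\beta<(2\dim A)\wedge d$ and take a Frostman measure $\mu$ on $A$ with $I_{\beta/2}(\mu)<\infty$; set $\rho=(B+f)_*\mu$. Since $B_s-B_t\sim N(0,|s-t|I_d)$ and $\beta<d$, Anderson's inequality gives
\[
\E|B_s-B_t+f(s)-f(t)|^{-\beta}\le\E|B_s-B_t|^{-\beta}=C(d,\beta)\,|s-t|^{-\beta/2}.
\]
Therefore $\E I_\beta(\rho)\le C\,I_{\beta/2}(\mu)<\infty$, so $I_\beta(\rho)<\infty$ a.s., which yields $\dim(B+f)(A)\ge\beta$ a.s.

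For the bound $\dim(B+f)(A)\ge\dim f(A)$, fix $\gamma<\dim f(A)$ and take a Frostman measure $\nu$ on the compact set $f(A)$ with $I_\gamma(\nu)<\infty$. Since $f$ is continuous and $A$ compact, the formula $g(y)=\min\bigl(A\cap f^{-1}(y)\bigr)$ defines a Borel section $g:f(A)\to A$ with $f\circ g=\mathrm{id}$; set $\mu=g_*\nu$ so that $f_*\mu=\nu$. The decisive Gaussian moment estimate is that for $\gamma<d$ and $W\sim N(0,\sigma^2 I_d)$,
\[
\E|W+m|^{-\gamma}\le C(d,\gamma)\,|m|^{-\gamma}\qquad\text{uniformly in }m\ne 0\text{ and }\sigma>0.
\]
Applied to $W=B_s-B_t$ and $m=f(s)-f(t)$, this yields
\[
\E I_\gamma(\rho)\le C\iint|f(s)-f(t)|^{-\gamma}\,d\mu(s)\,d\mu(t)=C\iint|u-v|^{-\gamma}\,d\nu(u)\,d\nu(v)=C\,I_\gamma(\nu)<\infty,
\]
so $\dim(B+f)(A)\ge\gamma$ a.s.

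The main technical obstacle is the uniform estimate above, which I would prove by splitting into two regimes according to $|m|/\sigma$. When $|m|\le 10\sigma$, combine Anderson's inequality with the identity $\E|W|^{-\gamma}=C_d\sigma^{-\gamma}$ and the trivial bound $\sigma^{-\gamma}\le 10^\gamma|m|^{-\gamma}$. When $|m|>10\sigma$, split the density integral $\int|x|^{-\gamma}p_{m,\sigma^2I}(x)\,dx$ at $|x|=|m|/2$: on $\{|x|\ge|m|/2\}$ bound $|x|^{-\gamma}$ by $(2/|m|)^\gamma$ and use the unit mass of the Gaussian; on $\{|x|<|m|/2\}$ use the Gaussian tail factor $e^{-|m|^2/(8\sigma^2)}$ together with the local integrability of $|x|^{-\gamma}$ near the origin (which requires $\gamma<d$), and verify that the resulting polynomial-times-Gaussian decay is dominated by a constant multiple of $|m|^{-\gamma}$.
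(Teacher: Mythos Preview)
Your proof is correct and follows essentially the same strategy as the paper: both parts use the energy method on the pushforward of a Frostman measure, with the key Gaussian moment inequalities $\E|W+m|^{-\beta}\le\E|W|^{-\beta}$ (Anderson) for the first bound and $\E|W+m|^{-\gamma}\le C|m|^{-\gamma}$ for the second, the latter proved by the same near/far splitting of the integral. The one technical difference is in lifting the Frostman measure $\nu$ from $f(A)$ back to $A$: the paper invokes Hahn--Banach and Riesz representation to produce $\tilde\nu$ with $\nu=\tilde\nu\circ f^{-1}$, whereas you construct an explicit Borel section $g(y)=\min\bigl(A\cap f^{-1}(y)\bigr)$ and push $\nu$ forward through it; your approach is more elementary and works here because $A\subset[0,1]$ is compact and $\{g\le t\}=f(A\cap[0,t])$ is closed, making $g$ lower semicontinuous.
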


Our next result concerns the dimension of the graph.
\begin{theorem}
\label{graphdim}
Let $f$ and $B$ be as in Theorem \ref{upperbound}.
\begin{description}
  \item[(a)] If $d = 1$, then $\dim G(B+f) \geq \max\{\frac{3}{2}, \dim G(f) \}$.
  \item[(b)] If $d \geq 2$, then $\dim G(B+f) \geq \max\{2, \dim G(f) \}$.
\end{description}
\end{theorem}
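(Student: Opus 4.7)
I would split the proof into two separate lower bounds whose maximum yields the desired inequality: $\dim G(B+f) \geq c_d$, where $c_1 = 3/2$ and $c_d = 2$ for $d \geq 2$, and $\dim G(B+f) \geq \dim G(f)$.

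\emph{The $c_d$ bound.} For $d \geq 2$, the spatial projection $(t,y) \mapsto y$ is $1$-Lipschitz from $G(B+f)$ onto $(B+f)[0,1]$, so Theorem \ref{upperbound} applied with $A = [0,1]$ gives $\dim G(B+f) \geq \dim (B+f)[0,1] \geq 2 \wedge d = 2$. For $d=1$ I would run the standard energy argument on the pushforward $\mu$ of Lebesgue measure under $t \mapsto (t, B(t)+f(t))$: bounding the Gaussian density of $B(t)-B(u)+f(t)-f(u)$ pointwise by $(2\pi|t-u|)^{-1/2}$ yields
\[
\E\bigl[(h^2 + (B(t)-B(u)+f(t)-f(u))^2)^{-s/2}\bigr] \leq C\,h^{1/2-s}
\]
uniformly in $f$, with $h = |t-u|$. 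Integrating gives finite expected $s$-energy for $s < 3/2$, and Frostman's lemma delivers $\dim G(B+f) \geq 3/2$.

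\emph{The $\dim G(f)$ bound.} For $s \in (c_d, \dim G(f))$, choose by Frostman's lemma a probability measure $\nu$ on $G(f)$ with finite $s$-energy $E_s(\nu)$. Project $\nu$ onto its marginal $\lambda$ on $[0,1]$ via $(t, f(t)) \mapsto t$, and push $\lambda$ forward by $t \mapsto (t, B(t)+f(t))$ to obtain a random measure $\tilde\mu$ on $G(B+f)$. The goal is to show
\[
\E[E_s(\tilde\mu)] = \int\!\!\int \E\bigl[(h^2+|X_{t,u}|^2)^{-s/2}\bigr]\,d\lambda(t)\,d\lambda(u) \;\leq\; C\, E_s(\nu) \;<\; \infty,
\]
with $X_{t,u} \sim N(f(t)-f(u),\, hI_d)$; once this is established, Frostman's lemma yields $\dim G(B+f) \geq s$. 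The natural target is the pointwise Gaussian inequality
\[
\E\bigl[(h^2+|X|^2)^{-s/2}\bigr] \leq C(s,d)\,(h^2+|m|^2)^{-s/2}, \qquad X \sim N(m, hI_d),
\]
or a suitable averaged version thereof that remains integrable against $d\lambda\,d\lambda$.

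\emph{Main obstacle.} The Gaussian estimate is the crux of the argument. For $s < d$ it reduces to $\E[|X|^{-s}] \leq C(h+|m|^2)^{-s/2}$, the same moment bound that powers the proof of Theorem \ref{upperbound}, combined with $h^2 + |X|^2 \geq |X|^2$ and $h \geq h^2$ (for $h \leq 1$). The delicate regime, relevant when $\dim G(f) > d$, is $s \geq d$ with $|m|$ comparable to $\sqrt h$: here $\E[(h^2+|X|^2)^{-s/2}]$ is of order $h^{d/2-s}$, which exceeds $(h^2+|m|^2)^{-s/2} \sim h^{-s/2}$, so the pointwise bound fails. To handle this region I would split the integration into a "good" region $|m| \geq K\sqrt h$ (where the pointwise bound holds after splitting the Gaussian event $\{|Z| \leq |m|/(2\sqrt h)\}$ and using the tail estimate $\P(|Z| \geq r) \leq C r^{d-2}e^{-r^2/2}$) and a "bad" shell $|m| \lesssim \sqrt h$; on the bad shell the Frostman ball estimate $\lambda\otimes\lambda(\{|t-u| \leq r,\ |f(t)-f(u)|\leq \sqrt r\}) \leq C r^{s/2}$ combined with a dyadic decomposition in $r = |t-u|$ is the natural tool, and making the shell contribution summable is the main technical challenge of the argument.
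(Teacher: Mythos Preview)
Your strategy --- energy method via Frostman, pull back the measure on $G(f)$ to $[0,1]$ and push forward to $G(B+f)$ --- is exactly the paper's. For the $c_d$ bound you both invoke Theorem~\ref{upperbound} when $d\ge 2$; for $d=1$ you bound the Gaussian density by its supremum $(2\pi h)^{-1/2}$, while the paper instead uses the rearrangement inequality~\eqref{eq:cheb} to dominate by the driftless expectation $\E[(h^2+|B_t-B_s|^2)^{-\alpha/2}]$. Both routes give the required $h^{1/2-s}$ control.

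For the bound $\dim G(B+f)\ge\dim G(f)$ the paper does precisely your ``natural target'': it asserts the pointwise Gaussian inequality
\[
\E\bigl[(h^2+|X|^2)^{-\alpha/2}\bigr]\le\frac{C}{(h^2+|m|^2)^{\alpha/2}},\qquad X\sim N(m,hI_d),
\]
for all $\alpha<\dim G(f)$, citing ``the same arguments as in the proof of Theorem~\ref{upperbound}'', and concludes immediately. No good/bad-shell split, no dyadic decomposition, no separate treatment of the range $\alpha\ge d$ appears. Your diagnosis of the obstacle is accurate: with $|m|=\sqrt{h}$ and $\alpha>d$ one computes the left side to be of order $h^{d/2-\alpha}$ while the right side is of order $h^{-\alpha/2}$, so the displayed inequality fails in exactly the range (for instance $d=1$, $\alpha\in(3/2,2)$) where it is needed beyond the $c_d$ bound. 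The paper's proof does not address this, so the extra work you outline is not an alternative route but a genuine repair of a gap that is also present in the paper; completing that repair (or finding another fix) is indeed the remaining task.
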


\section{0-1 laws} \label{01}
In this Section we first prove the general zero one law announced in the Introduction and then apply it to give the proof of Theorem~\ref{bmdrift}.
\subsection{The theorems}

\begin{theorem}\label{0-1}
Let $(\Omega, \mathcal{F},P)$ be a probability space.
Let $\Psi$ be a random set function, i.e. $\Psi: \Omega \times \{\text{ \rm closed sets in } [0,1] \} \to [0,\infty]$, such that
   $\Psi(A)$  is a random variable for each closed set $A \subset [0,1]$. Suppose that $\Psi$ satisfies:
\begin{enumerate}
\item
\label{independence}
For $I_1, \cdots, I_j$ disjoint closed intervals, $\Psi(I_1), \cdots, \Psi(I_j)$ are independent random variables.
\item
\label{nonatomic}
$\P\left(\forall x \in [0,1]: \Psi\{x\}=0\right)=1$.

\item
\label{monotonicity}
$\p(A) \leq \p(B)$ a.s., whenever $A \subset B$ and they are both closed.

\item
\label{subadditive}
With probability $1$ $\Psi(\omega, \cdot)$ satisfies the following:
If $(A_i)_i$ is a sequence of closed sets such that $\p(A_j)=0, \forall j$, and $\bigcup_{j=1}^{\infty} A_j$ is a closed set, then $\p\left(\bigcup_{j=1}^{\infty} A_j\right) =0$.
\end{enumerate}
Then $\P\left(\p\left([0,1]\right) >0 \right) \in \{0,1\}$.
\end{theorem}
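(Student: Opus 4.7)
The plan is to set $q(I) := \P(\Psi(I) > 0)$, write $p := q([0,1])$, and perform a case analysis at each point $x \in [0,1]$. Define the disjoint closed intervals $L_k(x) := [x - 2^{-k}, x - 2^{-k-1}] \cap [0,1]$ and $R_k(x) := [x + 2^{-k-1}, x + 2^{-k}] \cap [0,1]$; together with $\{x\}$ they form a closed countable partition of $[x - 1/2, x + 1/2] \cap [0,1]$. By hypothesis~(1) the random variables $\{\Psi(L_k(x)), \Psi(R_k(x))\}_{k \geq 1}$ are jointly independent.

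\textbf{Divergent case.} If $\sum_k (q(L_k(x)) + q(R_k(x))) = \infty$ for some $x$, the second Borel--Cantelli lemma yields infinitely many of these pieces with $\Psi > 0$ almost surely. Since for $k \geq n$ they lie in $[x - 2^{-n}, x + 2^{-n}]$, monotonicity (hypothesis~(3)) gives $\Psi([0,1]) \geq \Psi([x - 2^{-n}, x + 2^{-n}]) > 0$ a.s., so $p = 1$.

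\textbf{Convergent case.} If the sum converges for every $x$, the first Borel--Cantelli lemma together with hypothesis~(4) applied to the decomposition $[x - 2^{-n}, x + 2^{-n}] = \{x\} \cup \bigcup_{k \geq n}(L_k(x) \cup R_k(x))$ and the almost sure equality $\Psi(\{x\}) = 0$ from hypothesis~(2) gives $q([x - 2^{-n}, x + 2^{-n}] \cap [0,1]) \to 0$. A Lebesgue-number argument upgrades this to uniform continuity: for every $\epsilon > 0$ there is $\delta_0 > 0$ with $q(J) < \epsilon$ whenever $J \subset [0,1]$ is a closed interval of length at most $\delta_0$. Applying hypothesis~(1) to the shrunk disjoint intervals $[s, u - \eta]$ and $[u + \eta, t]$, sending $\eta \downarrow 0$, and identifying $\bigcap_\eta \{\Psi([s, u - \eta]) = 0\} = \{\Psi([s, u]) = 0\}$ almost surely via hypotheses~(2) and~(4), one obtains the multiplicative identity
\[
1 - q([s,t]) = (1 - q([s,u]))(1 - q([u,t])), \qquad 0 \leq s < u < t \leq 1.
\]
Thus $F(t) := -\log(1 - q([0,t]))$ is continuous and non-decreasing with $F(0) = 0$, and $q([s,t]) = 1 - e^{-(F(t) - F(s))}$. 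The boundary values $F(1) \in \{0, \infty\}$ correspond to $p \in \{0, 1\}$; I assume for contradiction that $F(1) \in (0, \infty)$.

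Let $I_{n,k} := [(k-1) 2^{-n}, k 2^{-n}]$ and $N_n := \#\{k : \Psi(I_{n,k}) > 0\}$. Hypothesis~(4) makes $N_n$ nondecreasing in $n$, and the multiplicative formula combined with uniform continuity of $F$ yields $\E N_n \to F(1)$; monotone convergence then gives $\E N_\infty = F(1) < \infty$, so $N_\infty < \infty$ almost surely. On the positive-probability event $\{\Psi([0,1]) > 0\}$, a compactness argument on midpoints of positive dyadic intervals produces a (random) concentration point $x$ with $\Psi([x - \delta, x + \delta]) > 0$ for every $\delta > 0$. Hypotheses~(2) and~(4) at this $x$ force infinitely many of the disjoint pieces $L_k(x), R_k(x)$ to have $\Psi > 0$, and each such piece contains (for $n$ large) a positive dyadic interval of level $n$; because the pieces are pairwise disjoint, the counts produce distinct dyadic intervals up to at most a factor $2$ from boundary sharing. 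Hence $N_n \to \infty$ on this event, contradicting $N_\infty < \infty$ a.s. The main technical hurdle is the derivation of the multiplicative identity: hypotheses~(2) and~(4) are exactly what convert independence on disjoint intervals into independence on abutting ones through the $\eta \downarrow 0$ limit.
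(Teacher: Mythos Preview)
Your approach is sound in spirit and reaches the conclusion, but there is one genuine slip, and the route differs substantially from the paper's.

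\textbf{The slip.} In the divergent case you assert that $\{\Psi(L_k(x)),\Psi(R_k(x))\}_{k\ge1}$ are jointly independent ``by hypothesis~(1)'', but consecutive pieces $L_k(x)=[x-2^{-k},x-2^{-k-1}]$ and $L_{k+1}(x)=[x-2^{-k-1},x-2^{-k-2}]$ share the endpoint $x-2^{-k-1}$ and hence are \emph{not} disjoint; hypothesis~(1) does not apply. The repair is easy and is exactly what the paper does for dyadic intervals: restrict to even (or odd) indices $k$, which give genuinely disjoint intervals, and note that at least one of the two subsums still diverges. Alternatively, the $\eta\downarrow0$ limiting trick you later use to derive the multiplicative identity already yields the product formula $\P\bigl(\bigcap_{k=m}^n\{\Psi(L_k)=0\}\bigr)=\prod_{k=m}^n(1-q(L_k))$ for abutting intervals, which is all the proof of the second Borel--Cantelli lemma requires.

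\textbf{Comparison with the paper.} The paper's dichotomy is on $\E Z_n=\sum_{I\in\mathcal D_n}q(I)$: if it diverges, independence on the odd (or even) sublattice of $\mathcal D_n$ forces $p=1$; if it stays bounded, a short argument shows there are only finitely many ``good points'' (points whose every containing dyadic interval is good), and then $[0,1]$ decomposes as this finite set together with countably many null intervals, whence $\Psi([0,1])=0$ a.s.\ via hypotheses~(2) and~(4). Your dichotomy is instead pointwise---whether some $x$ has divergent tail sum $\sum_k q(L_k(x))+q(R_k(x))$---and in the convergent case you pass through the multiplicative identity and the function $F(t)=-\log(1-q([0,t]))$ before returning to essentially the same concentration-point contradiction. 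The identity $1-q([s,t])=(1-q([s,u]))(1-q([u,t]))$ is a nice structural observation and is genuinely needed in your framework, since your ``convergent for every $x$'' hypothesis does not by itself bound $\E N_n$; the paper's case split already controls $\E Z_n$ directly, so it avoids the detour through $F$ and reaches $\Psi([0,1])=0$ without a contradiction argument.
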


\begin{proof}[\textbf{Proof}]
Let $\mathcal{D}_n$ denote the set of dyadic intervals of level $n$, i.e.
\[
\mathcal{D}_n = \left\{\left[\frac{k-1}{2^n}, \frac{k}{2^n}\right]: k=1,\cdots,2^n \right\}.
\]
We declare an interval $I \in \mathcal{D}_n$ to be \textbf{good} for the particular $\omega$, if $\p(I)>0$. Define $p_I = \P(\p(I)>0)$. Let $Z_n$ denote the number of good intervals of level $n$, i.e.
\[
Z_n = \sum_{I \in \mathcal{D}_n} \1(\Psi(I)>0).
\]
Then
$\E(Z_n) = \sum_{I \in \mathcal{D}_n} p_I$. From condition \ref{subadditive}, we see that $Z_n$ is an increasing sequence, thus $\E(Z_n)$ is increasing and hence converges. There are two possibilities for the limit, it is either infinite or finite.
\begin{itemize}
\item If $\E(Z_n) \uparrow \infty$, i.e. $\sum_{I \in \mathcal{D}_n} p_I \uparrow \infty$ as $n \to \infty$, then we define
\[
\mathcal{D}'_n = \left\{\left[\frac{k-1}{2^n}, \frac{k}{2^n}\right]: k=1,\cdots,2^n, k \text{ is odd}  \right\}
\]
and
\[
\mathcal{D}''_n = \left\{\left[\frac{k-1}{2^n}, \frac{k}{2^n}\right]: k=1,\cdots,2^n, k \text{ is even}  \right\}.
\]
Since $\sum_{I \in \mathcal{D}_n} p_I \uparrow \infty$, at least one of the two sequences $\sum_{I \in \mathcal{D}'_n} p_I$ or $\sum_{I \in \mathcal{D}''_n} p_I$ converges to infinity.

Assuming wlog that $\sum_{I \in \mathcal{D}'_n} p_I \to \infty$, we have that
\begin{align*}
\P(\p([0,1])>0) = \P\left(\p\left(\bigcup_{I \in \mathcal{D}_n}I\right)>0\right) = \P(\exists I \in \mathcal{D}_n \text{ s.t. } \p(I)>0)\\ \ge
\P(\exists I \in \mathcal{D}'_n \text{ s.t. } \p(I)>0) =
1- \prod_{I \in \mathcal{D}'_n}(1-p_I),
\end{align*}
where to get the second equality we used conditions \ref{monotonicity} and \ref{subadditive} and to get the last equality we used the independence assumption \ref{independence}, since the intervals in $\mathcal{D}'_n$ are disjoint. But $\sum_{I \in \mathcal{D}'_n} p_I \to \infty$, so we get that $\P\left(\p([0,1])>0\right)=1$.

\item
If $E(Z_n) \uparrow C$, where $C$ is a finite positive constant, then we will show that $\P(\p([0,1])>0)=0$. We now declare a point of $[0,1]$ to be \textbf{good} for a particular $\omega$, if all dyadic intervals that contain it are good.

It is easy to see that if for a realization $\omega$, there are at least $k$ good points, then there exists $n_0$ such that $Z_{n_0}(\omega) \geq k$. So, if there is an infinite number of good points, then  $Z_n \to \infty$ as $n \to \infty$. Let $S(\omega)$ denote the set of good points for the realization $\omega$. Then we have that
\[
\E(Z_n) \geq \E(Z_n \1(|S|= \infty)).
\]
If $\P(|S|= \infty)>0$, then from the discussion above and using monotone convergence we get that
$E(Z_n) \uparrow \infty$, which contradicts our assumption that $E(Z_n) \uparrow C$. Hence a.s. there are only finitely many good points.

So, for a.a. $\omega$, we have that $|S(\omega)| < \infty$. Take such an $\omega$. Then we write $\mathcal{D} = \cup_{n}\mathcal{D}_n$ and we have
\begin{align*}
[0,1] = S(\omega) \bigcup \bigcup_{\substack{I \in \mathcal{D} \\ \p(I)=0}}I,
\end{align*}
because if an interval $I$ has no good points, then $\Psi(I)=0$. Indeed, assuming the contrary, i.e. that
$\Psi(I)>0$, then we would get a decreasing sequence $I_n$ of closed intervals with $\Psi(I_n)>0$ and of lengths converging to $0$. But since the space is complete, the intersection of these closed intervals would have to be non-empty, and hence we would obtain a good point.

Thus, since by condition \ref{nonatomic} we have that $\p(S(\omega))=0$, then using also condition \ref{subadditive} we get that $\p([0,1])=0$. Hence we showed that in this case $\p([0,1])=0$ a.s.
\end{itemize}
\end{proof}

\begin{remark}\rm{
Let $\mu$ be the counting measure for the Poisson process on $[0,1]$. Then it satisfies all conditions  of Theorem~\ref{0-1}, except for (\ref{nonatomic}), yet there is no 0-1 law. }
\end{remark}

\begin{corollary}
Let $X$ be a continuous process on $[0,1]$ that satisfies $\P(A|B) \ge c \P(A)$, for all $A \in \sigma\{X(s)-X(t): s \ge t\}$ and $B \in \sigma\{X(s): s\le t\}$ for some constant $c<1$, and let $f$ be a continuous function. Then it satisfies again a 0-1 law, namely $\P(\mathcal{L}((X+f)[0,1])>0) \in \{0,1\}$.
\end{corollary}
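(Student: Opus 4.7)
The plan is to apply Theorem \ref{0-1} to the random set function $\Psi(A) := \mathcal{L}\bigl((X+f)(A)\bigr)$, with the caveat that the independence condition (\ref{independence}) is not available and must be replaced by the hypothesis in the statement. First I would check that conditions (\ref{nonatomic}), (\ref{monotonicity}) and (\ref{subadditive}) hold verbatim: a single point has zero Lebesgue measure, the image operation respects inclusion, and a countable union of Lebesgue-null sets is null. The enabling observation for handling (\ref{independence}) is that for any closed interval $I=[a,b]$, translation invariance of Lebesgue measure gives
\[
\Psi(I) = \mathcal{L}\bigl(\{X(s)-X(a)+f(s) : s \in I\}\bigr),
\]
so $\Psi(I)$ is measurable with respect to the increment $\sigma$-algebra $\sigma\{X(s)-X(a):s\ge a\}$.

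Next I would rerun the dichotomy in the proof of Theorem \ref{0-1}. With $p_I=\P(\Psi(I)>0)$ and $Z_n=\sum_{I\in\mathcal{D}_n}\mathbf{1}(\Psi(I)>0)$: if $\E Z_n\uparrow C<\infty$, the ``finitely many good points'' branch of the original proof carries over verbatim, since it uses only conditions (\ref{nonatomic})--(\ref{subadditive}) together with path continuity of $X$, and continuity is assumed. If $\E Z_n\uparrow\infty$, at least one of $\mathcal{D}'_n,\mathcal{D}''_n$ has $\sum_I p_I\to\infty$; enumerate the chosen family in increasing order as $I_1=[a_1,b_1],\ldots,I_m=[a_m,b_m]$, so $b_{k-1}<a_k$. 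Then $\{\Psi(I_1)=0,\ldots,\Psi(I_{k-1})=0\}$ lies in $\sigma\{X(s):s\le a_k\}$, while $\{\Psi(I_k)>0\}$ lies in $\sigma\{X(s)-X(a_k):s\ge a_k\}$. Applying the hypothesis at time $t=a_k$ and iterating yields
\[
\P\bigl(\Psi(I_1)=\cdots=\Psi(I_m)=0\bigr) \le \prod_{k=1}^{m}(1-c\,p_{I_k}) \le \exp\Bigl(-c\sum_{k=1}^{m}p_{I_k}\Bigr),
\]
which tends to $0$; hence $\P(\Psi([0,1])>0)=1$ in this branch.

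The main obstacle is simply the bookkeeping of the iterative conditioning: one has to verify that at each step the past event on which we condition really sits in $\sigma\{X(s):s\le a_k\}$, so that the hypothesis applies with $t=a_k$. The gaps between consecutive intervals of $\mathcal{D}'_n$ and $\mathcal{D}''_n$ make this automatic, which is exactly why the original proof of Theorem \ref{0-1} introduced the sparser subfamilies instead of working with all of $\mathcal{D}_n$ directly. Once that is addressed, the rest is a line-by-line transcription of the proof of Theorem \ref{0-1}, with the product $\prod(1-p_I)$ replaced by the FKG-type upper bound $\prod(1-cp_I)$.
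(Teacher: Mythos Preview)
Your proposal is correct and follows essentially the same line as the paper's proof: define $\Psi(I)=\mathcal{L}((X+f)(I))$, run the $\E Z_n$ dichotomy, handle the finite branch exactly as in Theorem~\ref{0-1}, and in the infinite branch replace independence by the iterated conditional bound $\prod(1-cp_I)$ coming from the hypothesis.

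One small difference worth noting: the paper does \emph{not} pass to the sparse subfamilies $\mathcal{D}'_n,\mathcal{D}''_n$ in the corollary. Your translation-invariance observation already shows that $\{\Psi(I_k)>0\}\in\sigma\{X(s)-X(a_k):s\ge a_k\}$ and $\{\Psi(I_1)=\cdots=\Psi(I_{k-1})=0\}\in\sigma\{X(s):s\le b_{k-1}\}$, and since $b_{k-1}=a_k$ for consecutive intervals of $\mathcal{D}_n$, the hypothesis applies at $t=a_k$ even without a gap. So the detour through $\mathcal{D}'_n,\mathcal{D}''_n$ is harmless but unnecessary here; it was needed in Theorem~\ref{0-1} only because condition~(\ref{independence}) there requires genuinely disjoint closed intervals.
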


\begin{proof}[\textbf{Proof}]
We divide the unit interval in dyadic subintervals in the same way as before and declare an interval $I$ good if $\mathcal{L}((X+f)(I))>0$ and let $Z_n$ be the total number of good intervals of level $n$. Then there are again two possibilities, either $E(Z_n) \to \infty$ or $E(Z_n)\to C$, where $C$ is a finite positive constant. In the second case, everything follows in the same way as in the proof of Theorem \ref{0-1}. Now, if $E(Z_n) \to \infty$, i.e. in the same notation as above
$\sum_{I \in \mathcal{D}_n}p_I \to \infty$, then we have
\begin{align*}
\P(\mathcal{L}((X+f)[0,1])>0) = \P(\exists I \in \mathcal{D}_n : \mathcal{L}((X+f)(I))>0) \\ = 1 - \P(\forall I \in \mathcal{D}_n, \mathcal{L}((X+f)(I))=0).
\end{align*}
But $\P(\forall I \in \mathcal{D}_n, \mathcal{L}((X+f)(I))=0) \le \prod_{I \in \mathcal{D}_n}(1 - cp_I)$, using the assumption on the conditional probabilities.
But $\prod_{I \in \mathcal{D}_n}(1 - cp_I) \to 0$ as $n \to \infty$, since $\sum_{I \in \mathcal{D}_n}p_I \to \infty$, hence we get that
\begin{align*}
\P(\mathcal{L}((X+f)[0,1])>0) =1.
\end{align*}
\end{proof}

Using Theorem \ref{0-1}, we can easily deduce Theorem~\ref{bmdrift} stated in the Introduction.

\begin{proof}[\textbf{Proof of Theorem \ref{bmdrift}}]
For part \textbf{(a)} we define $\Psi(I) = \mathcal{L}(B+f)(A \cap I)$ and then we use Theorem \ref{0-1}.

For part \textbf{(b)} we define $\Psi(I) = \1(((B+f)(A \cap I))^o \neq \emptyset)$. The only condition that requires some justification is \ref{subadditive}.
\newline
Let $A_i$ be a sequence of closed sets with closed union and such that $((B+f)(A_i))^o = \emptyset, \forall i$. Then $((B+f)(\cup A_i))^o = (\cup(B+f)(A_i))^o$, and since the sets $A_i$ are closed and $B+f$ is continuous, then
$\forall i, (B+f)(A_i)$ is also a closed set. From Baire's theorem we get that the interior of $(\cup(B+f)(A_i))$ is empty, since otherwise one of the sets would have nonempty interior.

To prove part \textbf{(c)}, let $\Psi(I) = \1(\text{dim}(B+f)(I)>c)$. Then it is easy to see that $\Psi$ satisfies conditions \ref{independence}, \ref{nonatomic} and \ref{monotonicity}. Finally condition \ref{subadditive} follows from the countable stability property of Hausdorff dimension.
\end{proof}

\begin{remark}{\rm
Note that it is crucial that the function $f$ in Theorem \ref{bmdrift} be deterministic. If $f$ is an adapted continuous function, then the 0-1 law could fail. Here is an example:
Let $B_t=(B^1_t,B^2_t)$ be a two dimensional Brownian motion started from the origin. Let $A_t$ be a process defined as follows: for $t \leq \frac{1}{2}$, define $A_t =0$ and for $t>\frac{1}{2}$,
if $B^1_{\frac{1}{2}}>0$, then let $A_t$ be the modified Hilbert curve as defined in \cite{Tonci}, otherwise set it to 0. In \cite{Tonci} it is proven that $B+A$ is space filling. Then we have that
\[
\P(\mathcal{L}(B+A)[0,1]>0) = \P(B^1_{\frac{1}{2}}>0) = \frac{1}{2}.
\]
}
\end{remark}

We now generalize Theorem \ref{0-1} to any Polish space $X$.

\begin{theorem}
\label{polish}
Let $(\Omega, \mathcal{F},\P)$ be a probability space. Let $X$ be a Polish space, i.e. a complete separable metric space.  Let $\Psi$ be a random set function, i.e. $\Psi: \Omega \times \{\text{compact sets of } X \} \to [0,\infty]$ such that for each compact subset $A$ of $X$, we have that $\Psi(A)$ is a random variable and it satisfies the following:

\begin{enumerate}
\item
\label{independence1}
$\Psi(I_1), \cdots, \Psi(I_j)$ are independent random variables for $I_1, \cdots, I_j$ disjoint closed balls,

\item
\label{nonatomic1}
$\P(\forall x \in X: \p\{x\}=0)=1$.

\item
\label{monotonicity1}
$\p(A) \leq \p(B)$ a.s., whenever $A \subset B$ and they are both compact and

\item
\label{subadditive1}
With probability $1$ $\Psi(\omega,\cdot)$ satisfies: if $(A_i)_i$ is a sequence of compact sets such that $\p(A_j)=0 \quad \forall j$, and $\bigcup_{j=1}^{\infty} A_j$ is a compact set, then $\p\left(\bigcup_{j=1}^{\infty} A_j\right) =0$.
\end{enumerate}
Then we have that $\P\left(\p\left(X\right) >0 \right) \in \{0,1\}$.
\end{theorem}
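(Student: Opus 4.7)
The plan is to rerun the argument of Theorem~\ref{0-1} with the dyadic intervals of $[0,1]$ replaced by a refining hierarchy of closed balls in $X$ built from separability and completeness. Fix a complete compatible metric $d$ and a countable dense set $\{x_i\}_{i\ge 1}\subset X$; at level $n$ consider $\mathcal{B}_n=\{\bar B(x_i,2^{-n})\}_{i\ge 1}$, and (via a Luzin/Cantor-scheme construction) thin it so that each level-$(n{+}1)$ ball is contained in a unique parent at level $n$ while $\bigcup\mathcal{B}_n$ still covers $X$. Call $B\in\mathcal{B}_n$ \emph{good} if $\Psi(B)>0$, let $p_B=\P(\Psi(B)>0)$, and set $Z_n=\sum_{B\in\mathcal{B}_n}\mathbf{1}_{\{\Psi(B)>0\}}$. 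Subadditivity~\ref{subadditive1} applied to the children of any parent makes $Z_n$ nondecreasing (a good parent must have a good child), so either $\E Z_n\uparrow\infty$ or $\E Z_n\uparrow C<\infty$.

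In the divergent case I would extract from $\mathcal{B}_n$ a pairwise disjoint subfamily $\mathcal{B}_n'$ of closed balls so that independence~\ref{independence1} applies and $\sum_{B\in\mathcal{B}_n'}p_B$ still diverges. This replaces the trivial odd/even split used for $[0,1]$: via a greedy Vitali-type selection, decompose $\mathcal{B}_n$ into countably many disjoint colour classes, and since the full sum $\sum_{B\in\mathcal{B}_n}p_B$ diverges, at least one class must also have divergent total. Independence and monotonicity~\ref{monotonicity1} then yield $\P(\Psi(X)>0)\ge 1-\prod_{B\in\mathcal{B}_n'}(1-p_B)\to 1$.

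In the bounded case I would define the set $S$ of \emph{good points} as those $x\in X$ for which every ball in $\bigcup_n\mathcal{B}_n$ containing $x$ is good. Completeness of $d$ together with Cantor's intersection theorem applied to a nested sequence of shrinking good sub-balls shows that any ball with no good point must satisfy $\Psi=0$, whence $X=S\cup\bigcup\{B\in\bigcup_n\mathcal{B}_n:\Psi(B)=0\}$, a countable union. The bound $|S|\le\liminf Z_n$ combined with $\E Z_n\uparrow C<\infty$ and monotone convergence forces $|S|<\infty$ a.s.; non-atomicity~\ref{nonatomic1} kills the finite set $S$ and subadditivity~\ref{subadditive1} kills the countable union, giving $\Psi(X)=0$ a.s.

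The main obstacle is the disjoint-subfamily extraction in the divergent case. For $[0,1]$ it is automatic, but in a generic Polish space the intersection graph of balls at a single scale can have unbounded chromatic number when $(X,d)$ is not doubling, so a careful packing or Besicovitch-style argument is needed to guarantee a colour class with $\sum p_B=\infty$. A secondary technical point is that $\Psi(X)$ appears in the conclusion although $X$ itself need not be compact; this is handled by an exhaustion $X=\bigcup_m K_m$ through compact sets together with monotonicity, reducing the problem to each $K_m$ before passing to the union.
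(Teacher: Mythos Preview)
Your overall architecture mirrors the paper's, but the divergent case contains a real gap. You write that after decomposing $\mathcal{B}_n$ into countably many pairwise-disjoint colour classes, ``since the full sum $\sum_{B\in\mathcal{B}_n}p_B$ diverges, at least one class must also have divergent total.'' That pigeonhole step is false for countable partitions: take $p_{B_k}=1$ and put each $B_k$ in its own class. You already flagged that a finite Besicovitch-type colouring is unavailable in a non-doubling Polish space, and the countable substitute you propose does not salvage the argument. Without a disjoint subfamily whose $p$-sum is unbounded, independence~\ref{independence1} cannot be invoked, and the divergent case collapses.

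The paper sidesteps this obstacle entirely by abandoning the level structure. It works with the single countable family $\Gamma=\{\bar B(x,r):x\in D,\ r\in\Q\}$ and splits according to whether
\[
\sup\Bigl\{\textstyle\sum_{B\in\Lambda}\P(\Psi(B)>0):\ \Lambda\subset\Gamma\ \text{pairwise disjoint}\Bigr\}
\]
is infinite or finite. In the infinite case one simply picks disjoint $\Lambda$ with $\sum_{B\in\Lambda}p_B$ as large as desired and applies independence directly; no colouring or hierarchy is needed. In the finite case the paper defines a point to be good if every $\Gamma$-ball containing it is good, observes that $k$ good points yield a disjoint family with $k$ good balls (separate the points by small enough balls), so finiteness of the supremum forces $|S|<\infty$ a.s., and then the decomposition and nested-ball argument you describe go through verbatim. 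The moral: taking the supremum over \emph{all} disjoint subfamilies of $\Gamma$ replaces the odd/even trick from $[0,1]$ without any covering-lemma technology. Your secondary observation about $\Psi(X)$ when $X$ is not compact is well taken; the paper does not address it explicitly either, and an exhaustion as you suggest (or simply reading the conclusion for compact $K\subset X$) is the natural fix.
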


\begin{proof}[\textbf{Proof}]
Since $X$ is separable there exists a countable dense subset, which we denote by $D$. Let $\Gamma = \{\bar{B}(x,r): x \in D, r \in \Q \}$. For a particular $\omega$, we declare a closed ball $B$ to be \textbf{good} if $\p(B)>0$. A subset $\Lambda$ of $\Gamma$ is of type 1 if it contains only disjoint sets. For such a $\Lambda$ we define $Z_{\Lambda}$ to be the number of good balls in $\Lambda$. Then
$E(Z_{\Lambda}) = \sum_{B \in \Lambda} \P(\p(B)>0)$ and there are two possibilities.
\begin{itemize}
\item
If $\sup_{\Lambda \subset \Gamma: \text{ type 1}} \sum_{B \in \Lambda} \P(\p(B)>0) = \infty$, then if $\Lambda$ is of type 1, we get
\begin{align*}
\P(\p(X)>0) \ge \P(\exists B \in \Lambda: \p(B)>0) = 1 - \prod_{B \in \Lambda}(1-\P(\p(B)>0)) \\
\geq 1-\exp(-\sum_{B \in \Lambda}\P(\Psi(B)>0)).
\end{align*}
Taking $\sup$ over all $\Lambda \subset \Gamma$ of type 1, we deduce that $\P(\p(X)>0) = 1$.

\item
If $\sup_{\Lambda \subset \Gamma: \text{ type 1}} \sum_{B \in \Lambda} \P(\p(B)>0) < \infty$, then again for an $\omega \in \Omega$, we declare a point of $X$ to be good, if every ball in $\Gamma$ that contains it is good. Note that if there are at least $k$ good points, then there must exist a family $\Lambda$ of type 1, such that $Z_{\Lambda} \ge k$.
Let $S(\omega)$ denote the set of good points. On the event that $|S(\omega)| = \infty$, we can find a sequence $(\Lambda_n)_n$ such that $Z(\Lambda_n) \uparrow \infty$. Thus, if $\P(|S| = \infty)>0$, then
by monotone convergence we get that
$\sup_{\Lambda \subset \Gamma: \text{ type 1}} \sum_{B \in \Lambda} \P(\p(B)>0) = \infty$, contradiction. Hence there is only a finite number of good points, $S$, and we now decompose $X$ as
\[
X = S \bigcup \bigcup_{B \in \Gamma: B \cap S = \emptyset}B.
\]
For any $B$ that does not contain any good points we have that $\p(B)=0$, since if $\p(B)>0$, then we could cover $B$ by a countable number of balls of radius $\frac{1}{2}$ and one of them would have $\p(B_1)>0$ and continuing in the same way we would obtain a decreasing sequence of closed balls of radii tending to 0 and thus since the space is complete, this intersection would have to be nonempty, hence we would obtain a good point.

Finally using conditions \ref{nonatomic1} and \ref{monotonicity1} we get that  a.s. $\p(X) = 0$.
\end{itemize}
\end{proof}

\subsection{An event of intermediate probability}\label{counterex}
Here we present an event that seems similar to those discussed in Theorem, yet does not obey a 0-1 law. This example was mentioned in    \cite[Exercise 9.8]{MP}, but there the proof is only sketched. We include it here with more details.

We first recall the definition of the capacity of a set.
\begin{definition}\rm{
Let $K:\R^d\times \R^d \to [0,\infty]$ be a kernel and $A$ a Borel set in $\R^d$. The $K$-energy of a measure $\mu$
is defined to be
\[
\mathcal{E}_K(\mu) = \int \int K(x,y) d\mu(x)d\mu(y)
\]
and the $K$-capacity of $A$ is defined as
\[
\mathrm{Cap}_K(A) = [\inf\{\mathcal{E}_K(\mu): \mu \text{ a probability measure on } A \}]^{-1}.
\]
When the kernel has the form $K(x,y) = |x-y|^{-\alpha}$, then we write
$\mathcal{E}_\alpha(\mu)$ for $\mathcal{E}_K(\mu)$ and
$\mathrm{Cap}_\alpha(A)$
for $\mathrm{Cap}_K(A)$ and we refer to them as the $\alpha$-energy of $\mu$ and the Riesz $\alpha$-capacity
of $A$ respectively.}
\end{definition}

We recall the following theorem (its proof can be found in \cite[Theorem~4.32]{MP} for instance), which gives the connection between the Hausdorff dimension and the Riesz $\alpha$-capacity, because it will be used extensively in this paper.

\begin{theorem}[Frostman]\label{thm:dimcap}
For any closed set $A \subset \R^d$,
\[
\dim A = \sup \{\alpha: \mathrm{Cap}_\alpha(A) >0 \}.
\]
\end{theorem}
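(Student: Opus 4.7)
The plan is to prove the two inequalities separately: $\dim A \ge \sup\{\alpha:\mathrm{Cap}_\alpha(A)>0\}$ (the ``energy method'' direction) and $\dim A \le \sup\{\alpha:\mathrm{Cap}_\alpha(A)>0\}$ (which relies on Frostman's lemma). Throughout, WLOG $A$ is compact, since $\dim$ and $\mathrm{Cap}_\alpha$ are both inner regular in a suitable sense.

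For the first direction, suppose $\mathrm{Cap}_\alpha(A)>0$ and pick a probability measure $\mu$ on $A$ with $\mathcal{E}_\alpha(\mu)<\infty$. Define the $\alpha$-potential $\phi(x)=\int|x-y|^{-\alpha}\,d\mu(y)$. Since $\int\phi\,d\mu<\infty$, Markov's inequality yields a compact $A'\subset A$ with $\mu(A')>0$ on which $\phi\le M$. For any $x\in A'$ and $r>0$,
\[
r^{-\alpha}\mu(B(x,r))\le\int_{B(x,r)}|x-y|^{-\alpha}\,d\mu(y)\le M,
\]
so $\nu:=\mu|_{A'}/\mu(A')$ satisfies $\nu(B(x,r))\le Cr^\alpha$ uniformly. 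The mass distribution principle then gives $\mathcal{H}^\alpha(A)\ge\mathcal{H}^\alpha(A')>0$, whence $\dim A\ge\alpha$. Taking the supremum over admissible $\alpha$ completes this direction.

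For the reverse inequality, fix $\alpha<\dim A$ and choose $s\in(\alpha,\dim A)$. I invoke Frostman's lemma to produce a probability measure $\mu$ supported on $A$ satisfying $\mu(B(x,r))\le Cr^s$ for all $x,r$. By the layer-cake identity,
\[
\phi(x)=\int_0^\infty\mu\bigl(B(x,t^{-1/\alpha})\bigr)\,dt\le\int_0^\infty\min\bigl(1,Ct^{-s/\alpha}\bigr)\,dt,
\]
which is finite precisely because $s/\alpha>1$. Hence $\mathcal{E}_\alpha(\mu)=\int\phi\,d\mu<\infty$, so $\mathrm{Cap}_\alpha(A)>0$, yielding $\alpha\le\sup\{\beta:\mathrm{Cap}_\beta(A)>0\}$. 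Letting $\alpha\uparrow\dim A$ finishes the proof.

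The main obstacle is Frostman's lemma itself: from the single piece of data $\mathcal{H}^s(A)>0$ one must construct a nonzero measure on $A$ with the two-sided control $\mu(B(x,r))\le Cr^s$. The standard route is a dyadic construction, distributing unit mass on the dyadic cubes meeting $A$ at some scale and then iteratively pruning so that the mass of every cube of side $2^{-n}$ is at most $2^{-ns}$, finally extracting a weak-$\ast$ limit as the scale refines and using the $s$-dimensional Hausdorff premeasure lower bound to guarantee the limit is nontrivial. Everything else in the argument is essentially bookkeeping with the layer-cake formula and Chebyshev's inequality.
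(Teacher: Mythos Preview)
Your proof is correct and follows the standard route (energy method for the upper bound on the capacity side, Frostman's lemma for the lower bound). However, the paper does not actually prove this theorem: it is stated as a background result and the reader is referred to \cite[Theorem~4.32]{MP} for the proof. So there is nothing to compare against beyond noting that what you wrote is essentially the argument one finds in that reference.
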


\begin{example}
Let $(B_t, 0\le t \le 3)$ be a standard Brownian motion in one dimension. Let $A$ be a closed subset of $[0,3]$. Then we will show that
it is not always true that $\P(B \text{ is 1-1 on } A) \in \{ 0,1 \}$ .
\end{example}

\begin{proof}[\textbf{Proof}]
It is clear that if $A$ is any closed interval, then the above probability is 0.

We are going to use the following equivalence (\cite[Theorem~1.1]{Koshnevisan}): for any two disjoint closed sets $\Lambda_0$ and $\Lambda_1$,
\begin{align}\label{eq:koshncrit}
\P(B(\Lambda_0) \cap B(\Lambda_1) \neq \emptyset) >0 \Leftrightarrow \text{Cap}_{\frac{1}{2}}(\Lambda_0 \times \Lambda_1) >0.
\end{align}
Let
\[
A_0 = \{ \sum_{n=1}^{\infty} \frac{x_n}{2^n}: x_n \in \{0,1\} \text{ and } \forall k, \forall n \in ((2k)!, (2k+1)!], x_n=0 \}
\]
be a closed subset of $[0,1]$ and let
\[
A_1 = \{ 2+ \sum_{n=1}^{\infty} \frac{x_n}{2^n}: x_n \in \{0,1\} \text{ and } \forall k, \forall n \in ((2k-1)!, (2k)!], x_n=0\}
\]
be a closed subset of $[2,3]$.

We will show that $\P(B \text{ is 1-1 on } A_0\cup A_1) \notin \{0,1\}$. This probability is equal to
\[
\P(B(A_0) \cap B(A_1) =\emptyset, B \text{ is 1-1 on } A_0, B \text{ is 1-1 on } A_1).
\]

We have that $\P(B(A_0) \cap B(A_1) \neq \emptyset)$ is strictly smaller than 1, since $A_0 \subset [0,1]$ and $A_1 \subset [2,3]$ and with positive probability the images of $[0,1]$ and $[2,3]$ under Brownian motion are disjoint.

It is easy to see  that $\text{dim}(A_0)=0$.
Indeed, for each $k$, we can cover $A_0$ by at most $2^{(2k)!}$ dyadic intervals of length $2^{-(2k+1)!}$. Hence, for all $k$,
\[
\mathcal{H}^{\alpha}_{\infty}(A_0) \leq 2^{(2k)!} (2^{-(2k+1)!})^{\alpha} \to 0, \text{ as } k \to \infty.
\]
The same argument also gives that $\dim (A_0 \times A_0)=0$ and $\dim (A_1 \times A_1)=0$.

Note that $A_0+A_1 = \{ x+y: x \in A_0, y \in A_1\} = [2,3] $ and since the mapping $f: A_0 \times A_1 \to [2,3]$ given by $f(x,y)= x+y$
is Lipschitz with constant 1, we get that $\text{dim}(A_0 \times A_1) \geq 1$, and hence by Theorem~\ref{thm:dimcap}, $\text{Cap}_{\frac{1}{2}}(A_0 \times A_1) >0$, so from \eqref{eq:koshncrit} we deduce that
\[
\P(B(A_0) \cap B(A_1) \neq \emptyset)>0.
\]

We now want to show that $\P(B \text{ is 1-1 on } A_0) = 1$ and similarly for $A_1$.
\newline
Let $\Delta = \{(x,x):x \in \R \}$ and note that we can write
\[
A_0 \times A_0 \setminus \Delta= \bigcup_{\substack {a,b,c,d \in \Q \\ [a,b]\cap[c,d]=\emptyset}} ([a,b]
\cap A_0) \times ([c,d]\cap A_0).
\]
We now have
\begin{align*}
\P(B \text{ not 1-1 on } A_0) = \P(\exists \quad (s,t) \in A_0 \times A_0 \setminus \Delta : B(s)=B(t))
\\\leq \sum_{\substack {a,b,c,d \in \Q \\ [a,b]\cap[c,d]=\emptyset}}
\P(B([a,b]
\cap A_0) \cap B([c,d]\cap A_0) \neq \emptyset).
\end{align*}
But $[a,b] \cap A_0$ and $[c,d]\cap A_0$ are disjoint and closed sets, so from \eqref{eq:koshncrit} we get that
\[
\P(B([a,b]\cap A_0) \cap B([c,d]\cap A_0) \neq \emptyset)=0,
\]
since $\dim (([a,b]\cap A_0)\times ([c,d]\cap A_0))=0$.


Therefore we deduce that $\P(B \text{ is 1-1 on } A_0\cup A_1) = \P(B(A_0)\cap B(A_1)=\emptyset)$, and hence strictly between 0 and 1.

%
%
%
%
%
\end{proof}

\section{Hitting probabilities}\label{hitting}
In this section, we are going to give the proof of Theorem \ref{interequi}. We will need a preliminary Proposition, given just below, which compares the Green kernels for $B$ and $B+f$. Recall that the transition density of the Brownian motion in $d$ dimensions is
\[
p(t,x,y) = \frac{1}{(2\pi t)^{\frac{d}{2}}} e^{-\frac{|x-y|^2}{2t}}
\]
and the corresponding Green kernel is
\[
G(x,y) = \int_{0}^{\infty} p(t,x,y)\,dt.
\]
Similarly the transition density for $B+f$ is given by
\[
\tilde{p}(t,x,y) = p(t,x-f(0),y-f(t))
\]
and the corresponding Green kernel is
given by $\tilde{G}(x,y)= \int_{0}^{\infty} \tilde{p}(t,x,y)\,dt$.

In dimension $2$ we consider the Green kernel for the killed process $G_{\lambda}(x,y) = \int_{0}^{\infty} e^{-\lambda t}p(t,x,y)\,dt$
and define $\tilde{G}_{\lambda}$ analogously.

\begin{proposition} \label{equivgreen}
Let $(B_t)_t$ be a transient Brownian motion in $d\geq 2$ dimensions. In dimension 2 we kill the Brownian motion after an independent Exponential time of parameter $\lambda$.  Let $f$ be a H\"older$(\frac{1}{2})$ continuous deterministic function $f: \R_+ \to \R^d$ with H\"older constant $K$ and let $\tilde{G}(x,y)$ be the Green kernel of the process $B+f$.
\begin{description}
  \item[(a)] \quad If $d\geq 3$, then there exist positive constants $c_1$ and $c_2$ depending only on $d$ and $K$, such that for all $x,y \in \R^d$, we have $c_1 G(x,y) \le \tilde{G}(x,y) \le c_2 G(x,y)$.
  \item[(b)] \quad If $d=2$, then for all $C>0$, there exist positive constants $c_1$ and $c_2$, depending on $C$, $K$ and $\lambda$, such that for all $x,y \in \R^2$ such that $|x-y|\leq C$, we have that \newline
      $c_1 G_{\lambda}(x,y) \le \tilde{G}_{\lambda}(x,y) \le c_2 G_{\lambda}(x,y)$.
\end{description}
\end{proposition}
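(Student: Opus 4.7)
The strategy is to pass via a pointwise comparison of the transition densities. Writing $u=x-y$ and $g(t)=f(t)-f(0)$, the H\"older hypothesis gives $|g(t)|\le K\sqrt t$, and one has
\begin{equation*}
\tilde p(t,x,y)=\frac{1}{(2\pi t)^{d/2}}\exp\!\Bigl(-\frac{|u+g(t)|^2}{2t}\Bigr).
\end{equation*}
The elementary quadratic inequalities $\tfrac12|u|^2-|g|^2\le |u+g|^2\le 2|u|^2+2|g|^2$, combined with $|g(t)|^2/t\le K^2$, will yield the pointwise two-sided bounds
\begin{equation*}
e^{-K^2}\,2^{-d/2}\,p(t/2,x,y)\;\le\;\tilde p(t,x,y)\;\le\;e^{K^2/2}\,2^{d/2}\,p(2t,x,y).
\end{equation*}

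For part \textbf{(a)}, I would simply integrate these bounds in $t\in(0,\infty)$ and rescale by $s=2t$ and $s=t/2$ respectively; the constant Jacobian then gives $c_1 G(x,y)\le\tilde G(x,y)\le c_2 G(x,y)$ with $c_1=e^{-K^2}2^{1-d/2}$ and $c_2=e^{K^2/2}2^{d/2-1}$, depending only on $d$ and $K$. The case $x=y$ is vacuous since both sides are $+\infty$.

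For part \textbf{(b)}, integrating against $e^{-\lambda t}$ and performing the same rescalings turns the time dilation into a change of the killing rate, producing
\begin{equation*}
e^{-K^2}\,G_{2\lambda}(x,y)\;\le\;\tilde G_\lambda(x,y)\;\le\;e^{K^2/2}\,G_{\lambda/2}(x,y).
\end{equation*}
It then remains to compare $G_{\lambda/2}$, $G_\lambda$ and $G_{2\lambda}$ on the strip $\{|x-y|\le C\}$. This is the step I expect to be the main (though classical) obstacle: it follows from the explicit representation $G_\mu(x,y)=\pi^{-1}K_0(|x-y|\sqrt{2\mu})$ and the standard asymptotics of the Macdonald function $K_0$, namely $K_0(r)\sim -\log r$ as $r\downarrow 0$ and $K_0(r)\asymp r^{-1/2}e^{-r}$ as $r\to\infty$, which imply that $G_{\mu_1}(x,y)/G_{\mu_2}(x,y)$ is bounded away from $0$ and $\infty$ uniformly for $0<|x-y|\le C$, with constants depending on $C$, $\mu_1$ and $\mu_2$. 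Chaining this with the previous display delivers the two-sided comparison claimed in (b). Everything outside the Green-kernel comparison in the 2-dimensional case is a one-line quadratic manipulation.
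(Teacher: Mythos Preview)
Your argument is correct, and the route is genuinely different from the paper's. You exploit the crude quadratic bounds $\tfrac12|u|^2-|g|^2\le|u+g|^2\le 2|u|^2+2|g|^2$ together with $|g(t)|^2\le K^2t$ to obtain a pointwise comparison of $\tilde p(t,\cdot,\cdot)$ with a \emph{time-dilated} heat kernel $p(2t,\cdot,\cdot)$ or $p(t/2,\cdot,\cdot)$. Integrating and rescaling then finishes (a) in one line, and for (b) pushes all the work into the comparison of $G_{\lambda/2}$, $G_\lambda$, $G_{2\lambda}$ on $\{|x-y|\le C\}$, which you dispatch via the explicit Macdonald representation $G_\mu(x,y)=\pi^{-1}K_0(|x-y|\sqrt{2\mu})$ and its small-argument asymptotics. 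The paper instead keeps the exact expansion $|u+g|^2=|u|^2+2\langle u,g\rangle+|g|^2$, bounds the cross term by $e^{Kr/\sqrt t}$, and then \emph{splits} the time integral at $t_0=cr^2$: for $t\le t_0$ the cross term is absorbed into half of the Gaussian exponent, while for $t>t_0$ it is simply bounded by a constant. In dimension two the same split is performed with the killing factor $e^{-\lambda t}$ carried along, and the restriction $r\le C$ enters when bounding $e^{\lambda t/2}\le e^{\lambda\alpha C^2}$ on the short-time piece. Your approach is cleaner for part (a) and gives explicit constants, at the price of importing the Bessel formula and its asymptotics for part (b); the paper's splitting argument is slightly more hands-on but entirely self-contained.
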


\begin{proof}[\textbf{Proof of (a)}]
Since $|x-y+f(t)-f(0)| \geq ||x-y|-|f(t)-f(0)||$, we have that
\begin{align*}
\tilde{G}(x,y) \leq \int_{0}^{\infty} \frac{1}{(2\pi t)^{\frac{d}{2}}} e^{-\frac{|x-y|^2}{2t}} e^{\frac{|x-y||f(t)-f(0)|}{t}} e^{-\frac{|f(t)-f(0)|^2}{2t}} dt,
\end{align*}
and since $f$ is H\"older$(\frac{1}{2})$ we have that $|f(t) -f(0)| \leq K \sqrt{t}$, for all $t$ for a positive constant $K$, and setting $r= |x-y|$ we get
\begin{align*}
\tilde{G}(x,y) \leq \int_{0}^{\infty} \frac{1}{(2\pi t)^{\frac{d}{2}}}e^{-\frac{r^2}{2t}}e^{\frac{Kr}{\sqrt{t}}}dt \le \int_{0}^{t_0}\frac{1}{(2\pi t)^{\frac{d}{2}}}
e^{-\frac{r^2}{4t}} dt + c'\int_{t_0}^{\infty}\frac{1}{(2\pi t)^{\frac{d}{2}}}e^{-\frac{r^2}{2t}}dt,
\end{align*}
where $t_0 = c r^2$ and $c$ is a sufficiently small constant. We also have that
\[
\int_{0}^{\infty} \frac{1}{(2\pi t)^{\frac{d}{2}}}e^{-\frac{r^2}{2t}}dt = c(d) r^{2-d},
\]
(see for instance \cite[Theorem~3.33]{MP}). So,
\[
\int_{0}^{t_0}\frac{1}{(2\pi t)^{\frac{d}{2}}}
e^{-\frac{r^2}{4t}} dt \leq \int_{0}^{\infty}\frac{1}{(2\pi t)^{\frac{d}{2}}}
e^{-\frac{r^2}{4t}} dt= c(d) \left(\frac{r}{\sqrt{2}}\right)^{2-d},
\]
thus there exists a uniform positive constant $c_1$ such that for all $x$ and $y$
\begin{align*}
\tilde{G}(x,y) \leq c_1 G(x,y).
\end{align*}

For the lower bound, using again the H\"older continuity assumption on $f$, we have
\begin{align*}
\tilde{G}(x,y) \geq \int_{0}^{\infty} \frac{1}{(2\pi t)^{\frac{d}{2}}} e^{-\frac{r^2}{2t}} e^{-\frac{|f(t)-f(0)|^2}{2t}} e^{-\frac{r|f(t)-f(0)|}{t}}dt \geq
\tilde{c}\int_{0}^{\infty} \frac{1}{(2\pi t)^{\frac{d}{2}}} e^{-\frac{r^2}{2t}}  e^{-\frac{Kr}{\sqrt{t}}}dt
\geq  c_2 G(x,y),
\end{align*}
for a positive constant $c_2$, uniform over all $x$ and $y$. The last inequality follows in the same way as the upper bound above.
\end{proof}

\begin{proof}[\textbf{Proof of (b)}]
Let $x$ and $y$ satisfy $r=|x-y| \leq C$, for $C>0$. Then for $t_0 = \alpha r^2$, for $\alpha$ a sufficiently small constant, we have that
\begin{align*}
\tilde{G}_\lambda(x,y)\leq \int_{0}^{\infty} \frac{1}{2\pi t} e^{-\frac{r^2}{2t}} e^{\frac{Kr}{\sqrt{t}}} e^{-\lambda t} dt \leq \int_{0}^{t_0} \frac{1}{2\pi t} e^{-\frac{r^2}{4t}} e^{-\lambda t} dt +
e^{K/\sqrt{\alpha}}\int_{t_0}^{\infty} \frac{1}{2\pi t} e^{-\frac{r^2}{2t}} e^{-\lambda t} dt \\ =\int_{0}^{2t_0} \frac{1}{2\pi t} e^{-\frac{r^2}{2t}} e^{-\lambda t} e^{\frac{\lambda t}{2}} dt + e^{K/\sqrt{\alpha}}\int_{t_0}^{\infty} \frac{1}{2\pi t} e^{-\frac{r^2}{2t}} e^{-\lambda t} dt \\ \leq
e^{\lambda C^2\alpha} \int_{0}^{2t_0} \frac{1}{2\pi t} e^{-\frac{r^2}{2t}} e^{-\lambda t}dt
+ e^{K/\sqrt{\alpha}}\int_{t_0}^{\infty} \frac{1}{2\pi t} e^{-\frac{r^2}{2t}} e^{-\lambda t} dt \leq
c_1 G_\lambda(x,y),
\end{align*}
The lower bound follows similarly.
\end{proof}


\begin{proof}[\textbf{Proof of Theorem \ref{interequi} (a)}]
From \cite[Proposition 1.1]{BPP} or \cite[Theorem 8.24]{MP} we have that for any transient Brownian motion
\begin{align} \label{equivmartin}
\frac{1}{2}\text{Cap}_M(A) \leq \P_{x_0}(B \text{ hits } A) \leq \text{Cap}_M(A),
\end{align}
where $M$ is the Martin kernel, defined by $M(x,y) = \frac{G(x,y)}{G(x_0,y)}$, and $A$ is a closed set.
To prove the theorem, it suffices to establish that there exist positive constants $c_1$ and $c_2$ depending only on the H\"older constant of $f$ such that for all starting points $x_0$ and all closed sets $A$
\begin{align}\label{eq:martin}
c_1 \text{Cap}_{M}(A) \leq \P_{x_0}(B+f \text{ hits } A) \leq c_2 \text{Cap}_{M}(A).
\end{align}
The proof of Theorem~8.24 in \cite{MP} can be adapted to show this; we will give the details for this adaptation for the upper bound on the hitting probability.
\newline
Let $\tau = \inf\{t>0: B_t + f(t) \in A\}$ and let $\nu$ be the distribution of $B_\tau+f(\tau)$. The total mass of $\nu$ is $\nu(A) = \P_{x_0}(\tau< \infty)$. From the definition of the Green kernel $\tilde{G}$ for $B+f$ we have that for any $y$
\begin{align}\label{eq:equal}
\E\int_{0}^{\infty} \1(|B_t+f(t) - y| < \epsilon)\,dt = \int_{\mathcal{B}(y,\epsilon)} \tilde{G}(x_0,z)\,dz.
\end{align}
Using Proposition~\ref{equivgreen}, we obtain that there exist constants $c$ and $c'$ that depend only on the H\"older constant of $f$ such that for all $x_0$ and $z$
\begin{align}\label{eq:tilde}
c G(x_0,z)  \leq  \tilde{G}(x_0,z) \leq  c' G(x_0,z).
\end{align}
Integrating over all $t$ the inequality
\[
\P_{x_0}(|B_t+f(t) - y| < \epsilon) \geq \P_{x_0}(|B_t+f(t) - y| < \epsilon, \tau \leq t),
\]
we get
\begin{align*}
\E_{x_0}\int_{0}^{\infty} \1(|B_t+f(t) - y| < \epsilon)\, dt \geq \E_{x_0} \int_{0}^{\infty} \1(B_{t+\tau} + f(t+\tau) -y| < \epsilon) \,dt
\\
= \E_{x_0} \E_{x_0}\left(\int_{0}^{\infty} \1(|B_{t+\tau} + f(t+\tau)|< \epsilon) \,dt \vert\mathcal{F}_{\tau} \right)
= \E_{x_0} \int_A \int_{\mathcal{B}(y,\epsilon)} G_{\tau}(x,z) \,dz \, d\nu(x) ,
\end{align*}
where $G_{\tau}$ is the Green kernel for the process $B_{t+\tau} + f(t+\tau)$, given by 
\[
G_\tau(x,y) = \int_{0}^{\infty} p(t, x - f(\tau), y- f(t+\tau)) \, dt.
\]
Given $\mathcal{F}_\tau$, by the strong Markov property $B_{t+\tau} - B_{\tau}$ is a standard Brownian motion independent of the past and $B_{\tau} + f(t+\tau)$ is a H\"older(1/2) function, independent of $B_{t+\tau} - B_{\tau}$, with the same H\"older constant as $f$. Therefore, given $\mathcal{F}_\tau$, for all $x$ and $z$ we have
\[
c G(x,z) \leq  G_\tau(x,z) \leq c' G(x,z),
\]
for the same constants $c$ and $c'$ appearing in \eqref{eq:tilde}, since they only depend on the H\"older constant of $f$.
\newline
We thus obtain
\[
\E_{x_0}\int_{0}^{\infty} \1(|B_t+f(t) - y| < \epsilon)\, dt  \geq c \int_A \int_{\mathcal{B}(y,\epsilon)}G(x,z) \, dz \, d\nu(x)
\]
and combining that with \eqref{eq:equal} and using \eqref{eq:tilde} we deduce that
\[
c' \int_{\mathcal{B}(y,\epsilon)} G(x_0,z)\,dz \geq c \int_A \int_{\mathcal{B}(y,\epsilon)} G(x,z) \,dz \, d\nu(x).
\]
Dividing through by $\mathcal{L}(\mathcal{B}(0,\epsilon))$ and letting $\epsilon \to 0$ we obtain
\[
c' G(x_0,y) \geq c \int_A G(x,y) \, d\nu(x),
\]
and hence, $\nu(A) \leq \frac{c'}{c} \text{Cap}_M(A)$.
\end{proof}
It is a classical fact that Brownian motion in 2 dimensions is neighborhood recurrent. In the following lemma we will prove that the same is true for $B+f$, if $f$ is a H\"older(1/2) continuous function. 

\begin{lemma}\label{lem:neighbrec}
Let $f$ be a H\"older(1/2) continuous function, $f: \R_+ \to \R^2$ and $B$ a standard Brownian motion in 2 dimensions. Then $B+f$ is neighborhood recurrent. 
\end{lemma}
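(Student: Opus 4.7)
The plan is to establish the stronger statement that, almost surely, the occupation time $Y := \int_0^\infty \mathbf{1}_A(X_t)\, dt$ of $A := B(y,\epsilon)$ by $X_t := B_t + f(t)$ is infinite. Since positive Lebesgue measure of $\{t : X_t \in A\}$ forces $\sup\{t : X_t \in A\} = \infty$, this implies neighborhood recurrence. I would break the argument into moment estimates, a Paley--Zygmund step, and a 0-1 law.

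For the moment estimates, using the Gaussian density of $B_t$ together with $|f(t)| \le K\sqrt{t} + |f(0)|$, a direct computation gives $c/t \le \P(X_t \in A) \le \epsilon^2/(2t)$ for $t \ge 1$, hence $\E Y_T \asymp \log T$ where $Y_T := \int_0^T \mathbf{1}_A(X_t)\, dt$. For the second moment, the Markov property at time $s$ combined with the pointwise bound $\P(X_t \in A \mid X_s) \le \epsilon^2/(2(t-s))$ (obtained by bounding the Gaussian transition density of $B_t - B_s$ by its maximum) yields $\P(X_s \in A, X_t \in A) \le C\epsilon^4/(s(t-s))$ whenever $s, t-s \ge 1$, and integrating over $0 \le s \le t \le T$ gives $\E Y_T^2 = O((\log T)^2)$, of the same order as $(\E Y_T)^2$.

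By the Paley--Zygmund inequality, $\P(Y_T \ge \tfrac{1}{2}\E Y_T) \ge (\E Y_T)^2/(4\E Y_T^2) \ge c_0 > 0$ uniformly in $T$. Since $\E Y_T \to \infty$, passing to the limit gives $\P(Y = \infty) \ge c_0 > 0$. To upgrade this to probability one, I would apply a 0-1 law: the event $\{Y = \infty\}$ lies in $\bigcap_T \sigma(X_t : t \ge T) = \bigcap_T \sigma(B_t : t \ge T)$ (the equality because $f$ is deterministic). Via the time inversion $W_s := s(B_{1/s} - B_0)$ with $W_0 := 0$, which is a standard 2-dimensional Brownian motion satisfying $\sigma(W_s : s \le 1/T) = \sigma(B_u : u \ge T)$, this tail coincides with the germ $\mathcal{F}_{0+}^W$; Blumenthal's 0-1 law forces this germ to be $\P$-trivial, so $\P(Y = \infty) = 1$.

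The hardest step is the matching second moment bound $\E Y_T^2 = O((\E Y_T)^2)$: a cruder estimate such as $\E Y_T^2 \le T \cdot \E Y_T$ would make Paley--Zygmund useless. The required factorization of $\P(X_s \in A, X_t \in A)$ into contributions at scales $s$ and $t-s$ rests on the H\"older$(1/2)$ control $|f(t)-f(s)| \le K\sqrt{t-s}$, which keeps the density of $X_t - X_s$ comparable to that of $B_t - B_s$ uniformly in the time-shift $s$.
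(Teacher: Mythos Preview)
Your argument is correct and follows essentially the same route as the paper: both run a second-moment (Paley--Zygmund) estimate on the occupation time of a ball, using $\P(X_t\in A)\asymp 1/t$ and the conditional bound $\P(X_t\in A\mid\mathcal F_s)\le C/(t-s)$ coming from the H\"older(1/2) hypothesis, and then upgrade the resulting uniform positive lower bound to probability one via the triviality of the Brownian tail $\sigma$-field. The only cosmetic differences are that the paper works with the window $[n,n^2]$ and the events $\{B+f\text{ hits }D\text{ after time }n\}$ directly, whereas you package the same estimates into $Y_T=\int_0^T\mathbf{1}_A(X_t)\,dt$ and the event $\{Y=\infty\}$; and the paper simply invokes ``tail event'' where you spell out the time-inversion/Blumenthal justification. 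One wording slip: in your first paragraph ``positive Lebesgue measure'' should read ``infinite Lebesgue measure'' (positive finite occupation time does not force unboundedness of the visiting set).
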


\begin{proof}[\textbf{Proof}]
Let $D$ denote the unit ball in $\R^2$. Without loss of generality, we will show that $B+f$ hits $D$ infinitely often almost surely. Let $w \in \R^2$. We will prove that
\[
\P_w\left(\bigcap_{n}\{B+f \text{ hits $D$ after time } n\}\right)=1.
\]
The event
$\bigcap_{n}\{B+f \text{ hits $D$ after time } n\}$ is a tail event for Brownian motion, and hence has probability either 0 or 1. Also,
\[
\P_w\left(\bigcap_{n}\{B+f \text{ hits $D$ after time } n\}\right)= \lim_{n \to \infty}\P_w\left(B+f \text{ hits $D$ after time } n\right).
\]
Let $T = \int_{n}^{n^2} \1(|B_t+f(t)|\le 1)\,dt$, which is the time spent in the unit ball between $n$ and $n^2$. Then
\[
\E_w(T) = \int_{n}^{n^2} \P_w(|B_t+f(t)|\le 1) \,dt
\]
and there exist positive constants $c_1$ and $c_2$ that depend only on the H\"older constant of $f$ such that for all $t\geq |w|^2$,
\[
\frac{c_2}{t} \leq \P_w(|B_t+f(t)|\le 1) \leq \frac{c_1}{t}.
\]
We thus obtain that
\[
\E_w(T) \ge c_2 \log{n}.
\]
For the second moment of $T$ we have
\begin{align*}
\E_w(T^2) = 2 \E_w \int_{n}^{n^2}  \int_{s}^{n^2} \1(|B_t + f(t) | \le 1) \1(|B_s + f(s)| \le 1) \,dt \,ds  \\
\le  2 \int_{n}^{n^2} \frac{c_1}{s} \int_{s}^{n^2} \left( \frac{c_1}{t-s} \wedge 1\right) \,dt \,ds
\le
2 \int_{n}^{n^2} \frac{c_1}{s} \int_{0}^{n^2} \left( \frac{c_1}{u} \wedge 1\right) \,du \,ds  \le c (\log{n})^2,
\end{align*}
for a positive constant $c$.
Therefore, applying the second moment method to $T$, namely $\P_w(T>0) \geq \frac{\E_w(T)^2}{\E_w(T^2)}$, we get that $\P_w(T>0) \geq \frac{c_2^2}{c} >0$, and hence
\begin{align*}
\P_w\left(B+f \text{ hits $D$ after time } n\right) \ge \P_w(T>0) >\frac{c_2^2}{c},
\end{align*}
which concludes the proof of the lemma.
\end{proof}

\begin{proof}[\textbf{Proof of Theorem \ref{interequi} (b)}]
From \cite[Proposition 1.1]{BPP} or \cite[Theorem 8.24]{MP} we have that 
\begin{align} \label{equivmartin1}
\frac{1}{2}\text{Cap}_{M_\lambda}(A) \leq \P_{x_0}(B \text{ hits } A) \leq \text{Cap}_{M_\lambda}(A),
\end{align}
where $M_{\lambda}$ is the Martin kernel for the killed Brownian motion $B^\lambda$ defined by $M_\lambda(x,y) = \frac{G_\lambda(x,y)}{G_\lambda(x_0,y)}$.
To prove the first part of the theorem it suffices to establish that for any bounded open set $U$, there exist positive constants $c_1$ and $c_2$ that depend only on $U$, on the H\"older constant of $f$ and on $\lambda$ such that for all $x_0 \in U$ and all closed sets $A\subset U$
\begin{align}\label{eq:martin2d}
c_1 \text{Cap}_{M_\lambda}(A) \leq \P_{x_0}((B+f)^{\lambda} \text{ hits } A) \leq c_2 \text{Cap}_{M_\lambda}(A).
\end{align}
The proof of that follows in the same way as the proof of \eqref{eq:martin} using also Proposition~\ref{equivgreen}(part {\textbf{(b)}}).

For the second part of the theorem, let $A$ be a nonpolar set for $B$, i.e. $\P_u(B \text{ hits } A)>0$, for all starting points $u$. By neighborhood recurrence of Brownian motion, we get that this probability is indeed equal to 1. Let $B^{\lambda}$ and $(B+f)^{\lambda}$ denote the processes killed after an Exponential time of parameter $\lambda$ independent of the Brownian motion. Let $x \in \R^2$. We can find a small $\lambda$ such that
\[
\P_x(B^{\lambda} \text{ hits } A)>c \quad \text{ and } \quad \P_0(B^{\lambda} \text{ hits } A)>c,
\]
for a positive constant $c$.
Using \eqref{eq:martin2d} we deduce that also $\P_x((B+f)^\lambda \text{ hits } A) >0$, and hence 
\[
\P_x(B+f \text{ hits } A) >0.
\]
We now need to show that $\P_x(B+f \text{ hits } A)=1$.
\newline
The event $\{B+f \text{ hits } A \text{ i.o.} \}$ is a tail event, and hence has probability either 0 or 1. Without loss of generality we assume that $A$ is separated from the ball of radius 2 centered at the origin.
By \eqref{eq:martin2d} for a ball $C$ of radius $\frac{1}{2}$ around 0, we can find positive constants $c_3$ and $c_4$ that depend only on $C$ and on the H\"older constant of $f$ , such that for all $z$ in $C$ we have that
\begin{align} \label{d2case}
c_3 \P_{z}(B^{\lambda} \text{ hits } A) \leq \P_{z}((B+f)^{\lambda} \text{ hits } A)\leq c_4 \P_{z}(B^{\lambda} \text{ hits } A).
\end{align}
First we will show that
\begin{align}\label{eq:inf}
\inf_{y \in C} \P_y(B^{\lambda} \text{ hits } A) > c_5>0,
\end{align}
and hence, using \eqref{d2case}, we will get that
\begin{align}\label{lowerbound}
\inf_{y \in C} \P_y(B+f \text{ hits } A) >c_6>0.
\end{align}
To show \eqref{eq:inf}, we will show that for all $x_0 \in C$
\begin{align}\label{eq:zero}
\P_{x_0}(B^{\lambda} \text{ hits } A) \ge c_7 \P_0(B^{\lambda} \text{ hits } A),
\end{align}
for a positive constant $c_7$.\newline
The probability $\P_0(B^{\lambda} \text{ hits } A)$ is bounded from above by the probability that a Brownian motion without killing started from 0 hits the boundary of $\mathcal{B}(x_0,1)$, denoted by $\partial \mathcal{B}(x_0,1)$, where $\mathcal{B}(x_0,1)$ is the ball of radius 1 centered at $x_0 \in C$, and then starting from the hitting point an independent Brownian motion with Exponential($\lambda$) killing hits $A$. Using Poisson's formula (it can be found in \cite[Theorem~3.44]{MP} for instance) we obtain
\begin{align*}
\P_0(B^{\lambda} \text{ hits } A) \le \int_{\partial \mathcal{B}(x_0,1)} \frac{1-|x_0|^2}{|z|^2} \P_z(B^{\lambda} \text{ hits } A)d\varpi(z) \le 4 \int_{\partial \mathcal{B}(x_0,1)} \P_z(B^{\lambda} \text{ hits } A)d\varpi(z),
\end{align*}
where $\varpi$ stands for the uniform distribution on the sphere $\partial\mathcal{B}(x_0,1)$
We also have
\begin{align*}
\P_{x_0}(B^{\lambda} \text{ hits } A) = \int_{\partial \mathcal{B}(x_0,1)} \P_{x_0}(T_{\partial \mathcal{B}(x_0,1)} \le T(\lambda)) \P_z(B^{\lambda} \text{ hits } A)d\varpi(z) \\
= c_8 \int_{\partial \mathcal{B}(x_0,1)}
\P_z(B^{\lambda} \text{ hits } A)d\varpi(z) \geq \frac{c_8}{4}\P_0(B^{\lambda} \text{ hits } A),
\end{align*}
where $T(\lambda)$ is the Exponential killing time of parameter $\lambda$ and $c_8 = \P_y(T_{\partial \mathcal{B}(y,1)} \le T(\lambda))$, a constant independent of $y$.

Let $T_n = \inf\{t \ge n: B(t)+f(t) \in C \}$. By the neighborhood recurrence, Lemma~\ref{lem:neighbrec}, we get that $T_n < \infty$ a.s.\ and thus we have
\begin{align*}
\P_x(B+f \text{ hits } A \text{ after time $n$}) \ge \P_x(B+f \text{ hits } C \text{ after time $n$, then hits $A$}) \\
\ge \inf_{y \in C} \P_y(B(t+T_n)+f(t+T_n) \text{ hits $A$}) = \inf_{y \in C} \P_y(\tilde{B}(t)+\tilde{f}(t) \text{ hits $A$}),
\end{align*}
where $\tilde{B}(t) = B(t+T_n) - B(T_n)$ is by the strong Markov property a standard Brownian motion and $\tilde{f}(t) = f(t+T_n) - f(T_n) + y$, which is still H\"older(1/2) continuous with the same constant as $f$ and conditioned on $T_n$, $\tilde{f}$ is independent of $\tilde{B}$. Hence, from \eqref{lowerbound},
since the constant $c_6$ depends only on the H\"older constant of $f$, we finally get that
\[
\P_x(B+f \text{ hits } A \text{ after time $n$}) \ge c_9>0,
\]
therefore
\[
\P_x(B+f \text{ hits $A$ i.o.}) =1.
\]
The last part of the Theorem, namely that polar sets for Brownian motion are also polar sets for $B+f$ follows easily from equation \eqref{d2case}, and this completes the proof of the theorem.
\end{proof}

\section{Double points}\label{double}
In this section, we give the proof of Theorem \ref{doublepoints}, which was stated in the Introduction. Some of the proofs in this section are similar to those for Brownian motion without drift.

\begin{theorem}\label{inters}
Let $B_1$ and $B_2$ be two independent Brownian motions in $d$ dimensions and let $f_1$ and $f_2$ be two H\"older$(\frac{1}{2})$ continuous deterministic functions $f_1, f_2: \R_+ \to \R^d$ with the same H\"older constant, $K$.
\begin{description}
   \item[(a)] If $d\ge 4$, then, almost surely, $(B_1+f_1)[0,\infty)$ and $(B_2+f_2)[0,\infty)$ have an empty intersection, except for a possible common starting point.
   \item[(b)] If $d\le 3$, then, almost surely, the intersection of $(B_1+f_1)[0,\infty)$ and $(B_2+f_2)[0,\infty)$ is nontrivial, i.e. contains points other than a possible common starting point.
\end{description}
\end{theorem}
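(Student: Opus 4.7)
The scheme is to reduce each dimension to the classical dichotomy for intersections of two independent driftless Brownian paths, using the intersection equivalence of Theorem \ref{interequi} applied twice (once to each drift $f_1$ and $f_2$). The main technical device is conditioning via the strong Markov property so that, after conditioning, the drift currently under consideration is deterministic, which is required to invoke Theorem \ref{interequi}.

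For part (a), fix $\delta \in (0, N)$ and set $\Lambda := (B_2+f_2)[\delta, N]$, which is a.s.\ closed. By the strong Markov property of $B_1$ at $\delta$, conditional on $\mathcal{F}^{B_1}_\delta \vee \sigma(B_2, f_2)$ the shifted process $s \mapsto (B_1+f_1)(\delta + s)$ has the form $B'(s) + g(s)$, with $B'$ a standard Brownian motion independent of the conditioning and $g(s) := (B_1+f_1)(\delta) + f_1(\delta+s) - f_1(\delta)$ a conditionally deterministic H\"older$(1/2)$ function with the same H\"older constant as $f_1$. Theorem \ref{interequi}(a) then gives
\begin{equation*}
\P\bigl(B' + g \text{ hits } \Lambda \,\big|\, \mathcal{F}^{B_1}_\delta, B_2, f_2\bigr) \leq c_2\, \P_{(B_1+f_1)(\delta)}(\tilde B \text{ hits } \Lambda \,|\, B_2, f_2)
\end{equation*}
for an auxiliary Brownian motion $\tilde B$. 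Taking expectations and iterating the same strong-Markov-and-intersection-equivalence step for $B_2+f_2$ (now with $\tilde B[0,\infty)$ playing the role of $\Lambda$) yields
\begin{equation*}
\P\bigl((B_1+f_1)[\delta,\infty) \cap \Lambda \neq \emptyset\bigr) \leq c_2^2\, \P\bigl(\tilde B[0,\infty) \cap \tilde B'[0, N-\delta] \neq \emptyset\bigr),
\end{equation*}
where $\tilde B, \tilde B'$ are independent Brownian motions started at random (but mutually independent) points. For $d \geq 4$ the classical Dvoretzky--Erd\H{o}s--Kakutani theorem forces the right-hand side to be zero regardless of the starts; sending $\delta \downarrow 0$ along $1/n$ and $N \uparrow \infty$ gives part (a).

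For part (b), the case $d = 1$ is immediate: each $B_i+f_i$ covers all of $\R$ a.s.\ because the H\"older$(1/2)$ drift is dominated by the Brownian LIL envelope, so by the intermediate value theorem the range is $\R$. For $d = 2$, Theorem \ref{upperbound} gives $\dim (B_2+f_2)[\delta, N] = 2$ a.s., hence this random closed set is nonpolar for $2$-dimensional Brownian motion; conditioning on $(B_2, f_2)$ and applying Theorem \ref{interequi}(b) (the ``nonpolar implies $\P_x(\text{hit})=1$'' clause) shows $B_1+f_1$ hits it almost surely, producing a nontrivial intersection. For $d = 3$, the same conditioning scheme as in (a), applied with the \emph{lower} bound in Theorem \ref{interequi}(a), produces
\begin{equation*}
\P\bigl((B_1+f_1)[\delta,\infty) \cap (B_2+f_2)[\delta, N] \neq \emptyset\bigr) \geq c_1^2\, \P\bigl(\tilde B[0,\infty) \cap \tilde B'[0, N-\delta] \neq \emptyset\bigr),
\end{equation*}
which is positive by the classical intersection result in $d \leq 3$.

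The main obstacle is the upgrade in the $d=3$ case from positive probability to probability one. I would partition the time axis into dyadic blocks $[2^n, 2^{n+1}]$ and apply the strong Markov property of $(B_1, B_2)$ at each $2^n$, using the intersection-equivalence bound above within each block. Two ingredients make the resulting conditional lower bound uniform in $n$: the normalized positions $\bigl((B_1+f_1)(2^n), (B_2+f_2)(2^n)\bigr)/\sqrt{2^n}$ form a tight family in $\R^d \times \R^d$ (Gaussian part plus $O(1)$ by the H\"older condition); and the H\"older$(1/2)$ class is preserved under Brownian scaling with the same constant, so the constants $c_1, c_2$ provided by Theorem \ref{interequi} do not vary across blocks. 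A conditional Borel--Cantelli argument then yields intersections in infinitely many blocks almost surely, completing part (b).
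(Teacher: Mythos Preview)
Your scheme for part (a) is essentially the paper's: both reduce to the driftless intersection problem by applying the intersection equivalence of Theorem~\ref{interequi} twice, once for each drift. The paper phrases the reduction through the Martin capacity bound~\eqref{eq:martin} (conditioning on $B_2$ and citing that $\E[\mathrm{Cap}_M(B_2[0,\infty))]=0$ in $d\ge 4$), while you iterate the hitting-probability inequality directly; the content is the same. One small imprecision: Theorem~\ref{interequi} compares hitting over $[0,\infty)$, not over a finite interval, so your displayed right-hand side should carry $\tilde B'[0,\infty)$ rather than $\tilde B'[0,N-\delta]$. This is harmless for the upper bound in (a) and for showing positivity in (b), but it is not literally what the theorem yields.

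For part (b) your route diverges from the paper's in two ways. First, the paper handles $d\le 2$ by projecting the $d=3$ construction; your separate arguments (range $=\R$ via the LIL for $d=1$; nonpolarity via $\dim=2$ for $d=2$) are correct but unnecessary. Second, and more substantively, the upgrade from positive probability to probability one in $d=3$ is done differently. The paper observes that rotation and Brownian scaling both preserve the H\"older$(1/2)$ class \emph{with the same constant $K$}, so the lower bound $c(K,|x|)$ on the intersection probability is in fact independent of the distance $|x|$ between starting points; writing $q$ for the supremum of the non-intersection probability over all starting configurations, the Markov property at a large time $t$ gives $q\le q(q+\epsilon)$, whence $q=0$. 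This multiplicativity trick is very short. Your dyadic-block conditional Borel--Cantelli approach also works, but ``tightness of normalized positions'' is not by itself a uniform lower bound on the conditional hitting probability: you still need an argument (Fatou on $\P(\liminf\{|X_n|>R\})\le\liminf\P(|X_n|>R)$, or recurrence of the AR(1) chain $X_{n+1}=(X_n+Z_n)/\sqrt{2}$) to ensure that the normalized positions fall in a fixed compact set infinitely often a.s., so that $\sum_n \P(A_n\mid \mathcal F_{2^n})=\infty$ a.s. Once you make that step explicit, your argument is complete; the paper's route simply avoids it by exploiting the full scaling symmetry to remove the dependence on starting points altogether.
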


\begin{proof} [\textbf{Proof of (a).}]
First we will show that $\P(B_2[0,\infty) \text{ intersects } (B_1+f_1)[0,\infty))=0$ and then using the intersection equivalence of $B_2$ and $B_2+f_2$ from Theorem \ref{interequi}
we will get the result. Recall from the Proof of Theorem~\ref{interequi} the definition of the Martin kernel $M$.
Conditioning on $B_2[0,\infty)$ and using the independence of $B_1$ and $B_2$, we get from \eqref{eq:martin} that there exist positive constants $c_1$ and $c_2$ such that
\[
c_1 \E\left(\text{Cap}_{M}(B_2[0,\infty))\right) \leq \P(B_2[0,\infty) \text{ intersects } (B_1+f_1)[0,\infty))\leq c_2 \E\left(\text{Cap}_{M}(B_2[0,\infty))\right).
\]
From \cite[Proof of part a, Theorem 9.1]{MP} we have that $\E\left(\text{Cap}_{M}(B_2[0,\infty))\right)=0$, and hence concluding the proof. 
\end{proof}

\begin{proof} [\textbf{Proof of (b).}]
If $d=3$, then, almost surely, $\text{Cap}_{M}(B_2[0,\infty))>0$. We suppose first that $B_1+f_1$ and
$B_2+f_2$ start from different points, the one from 0 and the other one from $x$. We then have
\[
\P_{0,x}((B_2+f_2)[0,\infty) \text{ intersects } (B_1+f_1)[0,\infty))>c(K,|x|)>0,
\]
where $c(K,|x|)$ is a constant that only depends on the H\"older constant $K$ and on the distance between the starting points. This follows from Proposition~\ref{equivgreen}, rotational invariance of Brownian motion and the fact that after a rotation $f$ is still a H\"older($\frac{1}{2}$) function with the same H\"older constant as $f$.
Also, by scaling invariance of Brownian motion and the fact that $\frac{f(\alpha^2t)}{\alpha}$ for any $\alpha \neq 0$ is also H\"older($\frac{1}{2}$) with the same H\"older constant, we get that the probability of intersection is lower bounded by a constant that only depends on the H\"older constant and not on the starting points. Thus we have
\begin{align}\label{lowbd}
\P((B_2+f_2)[0,\infty) \text{ intersects } (B_1+f_1)[0,\infty))>c(K)>0.
\end{align}
Let $q \leq 1-c(K)<1$ be the supremum over the starting points of the  probability that
$(B_1+f_1)[0,\infty)$ and $(B_2+f_2)[0,\infty)$ do not intersect.
Then there exists $t$ big enough so that
\[
\P(B_2(t_2)+f_2(t_2)\neq B_1(t_1)+f_1(t_1), \text{ for all } 0 < t_1, t_2 \le t ) \le q+ \epsilon.
\]
By the Markov property,
\begin{align*}
q \leq \P(B_2(t_2)+f_2(t_2)\neq B_1(t_1)+f_1(t_1), \forall t_1, t_2 \le t ) \times \\
\P(B_2(t_2)+f_2(t_2)\neq B_1(t_1)+f_1(t_1), \forall t_1, t_2 > t ) \le q(q+\epsilon)
\end{align*}
and as $\epsilon>0$ was arbitrary, we deduce that $q\le q^2$, and hence $q=0$.
If $B_1+f_1$ and $B_2+f_2$ start from the same point, then we write
\begin{align*}
\P(B_2(t_2)+f_2(t_2)\neq B_1(t_1)+f_1(t_1), \text{ for all } t_1, t_2 >0 ) \\
= \lim_{t \to 0} \P(B_2(t_2)+f_2(t_2)\neq B_1(t_1)+f_1(t_1), \forall t_1, t_2 > t )=0,
\end{align*}
which follows from the Markov  property of Brownian motion applied to time $t$.

For dimensions $d<3$, we project the 3-dimensional motion on the lower dimensional space to obtain nonempty intersection a.s.
\end{proof}

\begin{proof} [\textbf{Proof of Theorem \ref{doublepoints} (a)}]
We will adapt the proof of \cite[Theorem 9.22]{MP}. Let
\begin{align*}
X_1(t) = B\left(\frac{1}{2}+t\right) + f\left(\frac{1}{2}+t\right) - B\left(\frac{1}{2}\right) - f\left(\frac{1}{2}\right) \text{ and } \\ X_2(t) = B\left(\frac{1}{2}-t\right) + f\left(\frac{1}{2}-t\right) - B\left(\frac{1}{2}\right) - f\left(\frac{1}{2}\right).
\end{align*}
Then, since $X_1$ and $X_2$ are independent, by the independence of the increments of the Brownian motion, we get from Theorem \ref{inters} that $X_1$ and $X_2$ intersect almost surely, thus giving the result.
\end{proof}

\begin{proof} [\textbf{Proof of Theorem \ref{doublepoints} (b)}]
Let $\alpha \in [0,1]$  be a rational number. We will show that almost surely, there exist no times $0 \le t_1 < \alpha < t_2 \le 1$ with $B(t_1)+f(t_1)=B(t_2)+f(t_2)$. Let $X_1$ and $X_2$ be given by
\[
X_1(t) = B(\alpha+t) + f(\alpha+t) - B(\alpha) - f(\alpha) \text{ and } X_2(t) = B(\alpha-t) + f(\alpha-t) - B(\alpha) - f(\alpha).
\]
Then by the independence of the increments of Brownian motion we get that $X_1$ and $X_2$ are independent. By the H\"older assumption on $f$ we get that $f(\alpha+t) -f(\alpha)$ and $f(\alpha-t) - f(\alpha)$ are also H\"older($\frac{1}{2}$) continuous functions. Hence from Theorem \ref{inters} we deduce that $X_1$
and $X_2$ will not intersect almost surely, thus giving the result.
\end{proof}

So far we have shown that when the drift $f$ is a H\"older$(\frac{1}{2})$ function, then $B+f$ has no double points in dimension greater than or equal to 4 and has double points in dimension below 4. We will now give an example where adding the drift causes $B+f$ to have double points in dimension greater than 4.

\begin{theorem}\label{fractional}
Let $X$ be a fractional Brownian motion in $d\ge 4$ dimensions with Hurst index $\alpha< \frac{2}{d}$ and let $B$ be an independent standard Brownian motion in $d$ dimensions. Then a.s. $B+X$ has double points.
\end{theorem}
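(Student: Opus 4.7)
The plan is a classical second moment (energy) argument in which the hypothesis $\alpha<2/d$ will appear as the integrability threshold. Setting $Y=B+X$, I will fix two disjoint time intervals $I=[0,1/3]$, $J=[2/3,1]$ and a compactly supported smooth approximate identity $\phi_\epsilon$ at $0\in\R^d$, and consider
\[
J_\epsilon \;=\; \int_I\int_J \phi_\epsilon\bigl(Y(s)-Y(t)\bigr)\,ds\,dt.
\]
The goal is to prove $\liminf_\epsilon \E[J_\epsilon]>0$ and $\limsup_\epsilon \E[J_\epsilon^2]<\infty$, so that Paley--Zygmund gives $\P(J_\epsilon>0)\ge c_0>0$ uniformly in $\epsilon$. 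Since $\{J_\epsilon>0\}$ decreases in $\epsilon$, the intersection has probability at least $c_0$, and on that event a subsequence/compactness argument combined with continuity of $Y$ produces $(s,t)\in I\times J$ with $Y(s)=Y(t)$: a double point with positive probability.

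The first moment step is immediate: for $(s,t)\in I\times J$ the Gaussian $Y(s)-Y(t)$ has coordinatewise variance $(t-s)+(t-s)^{2\alpha}\asymp 1$, so its density at the origin is uniformly bounded below and $\E[J_\epsilon]\to c>0$. The second moment, after sending $\epsilon\downarrow 0$, equals
\[
\int_{I^2\times J^2}\frac{ds_1\,ds_2\,dt_1\,dt_2}{(2\pi)^d\bigl(\det\Sigma(s_1,t_1,s_2,t_2)\bigr)^{d/2}},
\]
where $\Sigma$ is the $2\times 2$ covariance matrix of the scalar coordinate pair $\bigl(Y(s_1)-Y(t_1),\,Y(s_2)-Y(t_2)\bigr)$ (the $d$-dependence comes from independence of the coordinates). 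Decomposing $\det\Sigma=\operatorname{Var}(Y(s_1)-Y(t_1))\cdot\sigma_{2|1}^2$, the first factor is $\asymp 1$ on $I^2\times J^2$, so the technical core is a lower bound on the conditional variance $\sigma_{2|1}^2$. Using that conditioning on more information cannot increase conditional variance,
\[
\sigma_{2|1}^2 \;\ge\; \operatorname{Var}\!\bigl(Y(s_2)-Y(t_2)\,\big|\,B(s_1),B(t_1),X(s_1),X(t_1)\bigr),
\]
which by independence of $B$ and $X$ splits as the sum of the corresponding $B$- and $X$-conditional variances. The $X$-contribution is bounded below by $c\bigl(|s_1-s_2|+|t_1-t_2|\bigr)^{2\alpha}$ via Pitt's strong local nondeterminism for fractional Brownian motion. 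Substituting and changing variables reduces finiteness of the integral to convergence of $\int_{\R^2}(|u|^{2\alpha}+|v|^{2\alpha})^{-d/2}\,du\,dv$ near the origin, which holds precisely when $\alpha d<2$, i.e.\ under the standing hypothesis $\alpha<2/d$.

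Finally, to promote positive probability of a double point in $I\times J$ to almost sure existence on $[0,1]$, I repeat the construction with $I,J$ replaced by every pair of disjoint dyadic subintervals, each yielding uniform positive probability, and apply a 0-1 law for the event ``$B+X$ has a double point'' (exploiting self-similarity of $B+X$ at small scales, where the fBm part dominates since $\alpha<1/2$, together with a germ-type Gaussian 0-1 law). The main obstacle is the second-moment estimate: transferring the strong local nondeterminism bound from $X$ to $Y=B+X$ and then controlling the resulting four-dimensional integral is the only delicate technical step; everything else follows the standard Paley--Zygmund / energy framework.
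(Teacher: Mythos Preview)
Your second--moment/Paley--Zygmund scheme is sound, and the integrability threshold $\alpha d<2$ is identified correctly. The paper, however, does not redo this computation: it invokes K\^ono's theorem \cite{Kono} for Gaussian sample paths and simply checks that the normalised correlation
\[
r'(s,t;u,v)=\frac{\E\bigl[(Y_s-Y_t)(Y_u-Y_v)\bigr]}{\sigma(Y_s-Y_t)\,\sigma(Y_u-Y_v)},\qquad Y=B+X,
\]
satisfies K\^ono's two structural inequalities: (i) $|r'|\le\epsilon$ when the pairs $(s,t)$ and $(u,v)$ are well separated, and (ii) $1-r'\ge c\,(|s-u|^{2\alpha}+|t-v|^{2\alpha})a^{-2\alpha}$ when they are close. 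Inequality (ii) is exactly the local--nondeterminism bound you extract from Pitt, so on the second--moment side the two arguments coincide in substance. The real difference is inequality (i): in K\^ono's framework it supplies the asymptotic decorrelation that feeds a Borel--Cantelli argument and yields double points \emph{almost surely}, with no separate $0$--$1$ law needed.

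That is where your write--up is thin. Appealing to ``self-similarity of $B+X$ at small scales together with a germ-type Gaussian $0$--$1$ law'' is not yet a proof: $B+X$ is not self-similar (scaling by $\lambda$ produces $\lambda^{1/2-\alpha}B'+X'$), so you must first argue that the Paley--Zygmund constant $c_0$ does not degenerate as the dyadic scale shrinks, and then identify precisely which $0$--$1$ law you are invoking for the germ $\sigma$-field of the \emph{pair} $(B,X)$. Both points can be made rigorous---the former because for $\alpha<1/2$ the fBm term dominates at small scales and your energy estimate is uniform in the Brownian perturbation, the latter via the general triviality of the germ $\sigma$-field for continuous Gaussian processes---but they deserve a few explicit lines. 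The paper's route through K\^ono's inequality (i) sidesteps this issue entirely, which is the main economy it buys; your direct approach is more self-contained but has to pay for the upgrade from positive probability to probability one.
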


\begin{proof}[\textbf{Proof}]
In \cite[Theorem 2]{Kono} it is proven that a fractional Brownian motion in $d$ dimensions with Hurst index $\alpha< \frac{2}{d}$ has double points a.s. K\^ono's proof works to show that $B+X$ has double points. The only thing  we need to check is that the correlation function $r'$ defined below satisfies the same two inequalities as in \cite{Kono}.
Let $a, \delta, L$ be positive numbers and let $s,t,u,v$ be real numbers satisfying:
\begin{align*}
a \leq |s-t| \leq a + 4 \delta, \quad a \leq |u-v| \leq a + 4 \delta \\
\min\{|s-u|,|s-v|,|t-u|,|t-v| \} \geq L.
\end{align*}
We now let
\begin{align*}
&r' = \frac{\E((X_s+B_s-X_t-B_t)(X_u+B_u - X_v - B_v))}{(|s-t|^{\alpha}+\sqrt{|s-t|})(|u-v|^{\alpha}+\sqrt{|u-v|})}
\\&= \frac{|t-u|^{2\alpha} + |t-u| + |s-v|^{2\alpha} + |s-v| - |s-u|^{2\alpha} - |s-u| - |t-v|^{2\alpha} - |t-v|}{2(|s-t|^{\alpha}+\sqrt{|s-t|})(|u-v|^{\alpha}+\sqrt{|u-v|})}.
\end{align*}
It is easy to see that by choosing $L$ large enough compared with $a$ and $\delta$ we get that $|r'| \leq \epsilon$, for $\epsilon>0$. Indeed, define $f(x) = |x-u|^{2\alpha} - |x-v|^{2\alpha}$, for $x < u,v$. Then
\begin{align*}
|r'| \leq c a^{-2\alpha} |f(t) - f(s)| = c' a^{-2\alpha} |t-s| |u-v| |\xi' - \xi|^{2\alpha - 2},
\end{align*}
where $\xi \in (s,t)$ and $\xi' \in (u,v)$, by applying the mean value theorem twice. Since $2\alpha - 2 <0$, we get that $|r'| \leq \epsilon$ for $L$ large enough.


We will now explain how we get the second inequality that $r'$ satisfies, namely that
\begin{align}
\label{corineq}
1-r' \geq c(|s-u|^{2\alpha} + |t-v|^{2\alpha}) a^{-2\alpha},
\end{align}
for a positive constant $c$, when $s,t,u,v$ are as follows:
\begin{align*}
|s-u|\leq 2\delta, \quad |t-u|\leq 2\delta \\
\min\{|s-t|,|s-v|,|t-u|,|u-v| \} \geq a.
\end{align*}
By translation and scaling it suffices to consider the case when $s=0, t=1$ and $v=1+\gamma$ and wlog we assume that $u<\gamma$.
Thus it suffices to show that
\begin{align*}
4((1+\gamma - u)^{\alpha}+ (1+\gamma - u)^{\frac{1}{2}}) - (1-u)^{2\alpha} - 2+2u - (1+\gamma)^{2\alpha}+ u^{2\alpha} + \gamma^{2\alpha} \geq c \gamma^{2 \alpha},
\end{align*}
for a positive constant $c$. Using Taylor expansion to first order terms and using the fact that $\alpha < \frac{2}{d}$, i.e. $2\alpha <1$, and that we can make $\gamma$ as small as we like since it is smaller than $\delta$, we get inequality \eqref{corineq}.
\end{proof}

\section{Hausdorff dimension of the image and graph}\label{hausd}
We will now state a classical result that will be used in the proof of Theorem~\ref{upperbound}. We include its proof here for the sake of completeness.
\begin{lemma}\label{hahnbanach}
Let $(A,d)$ be a compact metric space and $f: A \to \R$ a continuous function. If $\nu$ is a probability measure on $K=f(A)$, then there exists a probability measure $\mu$ on $A$ such that $\nu= \mu \circ f^{-1}$.
\end{lemma}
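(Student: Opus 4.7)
The plan is to prove this lemma via the Hahn--Banach theorem followed by the Riesz representation theorem, as the name of the lemma suggests. The pushforward $\mu \circ f^{-1}$ is characterized by duality with continuous functions on $K$, so I want to build $\mu$ by first defining the ``correct'' integration functional on the subspace of $C(A)$ consisting of functions that factor through $f$, then extending it to all of $C(A)$.

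First I would introduce the subspace $V = \{ g \comp f : g \in C(K)\} \subset C(A)$ and define a linear map $L: V \to \R$ by
\[
L(g \comp f) = \int_K g \, d\nu.
\]
This is well defined: if $g_1 \comp f = g_2 \comp f$ on $A$, then $g_1 = g_2$ on $f(A) = K$, so the two integrals against $\nu$ coincide. The functional $L$ is positive on $V$ (if $g \comp f \ge 0$ on $A$ then $g \ge 0$ on $K$), satisfies $L(\1_A) = 1$, and is bounded with $\|L\|_{V \to \R} = 1$, since $|L(g\comp f)| \le \|g\|_{\infty, K} = \|g \comp f\|_{\infty, A}$.

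Next, apply the Hahn--Banach theorem to extend $L$ to a bounded linear functional $\tilde L$ on $C(A)$ with $\|\tilde L\| = 1$ and $\tilde L(\1_A) = 1$. The standard observation then gives that $\tilde L$ is automatically positive: if $h \in C(A)$ with $0 \le h \le 1$, then $\|\1_A - h\|_\infty \le 1$, so $|\tilde L(\1_A) - \tilde L(h)| \le 1$, giving $\tilde L(h) \ge 0$. By the Riesz representation theorem on the compact metric space $A$, there exists a Borel probability measure $\mu$ on $A$ such that
\[
\tilde L(h) = \int_A h \, d\mu \quad \text{for all } h \in C(A).
\]

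Finally, for any $g \in C(K)$,
\[
\int_K g \, d\nu = L(g \comp f) = \tilde L(g \comp f) = \int_A g \comp f \, d\mu = \int_K g \, d(\mu \comp f^{-1}),
\]
so the pushforward $\mu \comp f^{-1}$ and $\nu$ agree when integrated against any continuous function on the compact metric space $K$; by uniqueness in the Riesz representation, $\nu = \mu \comp f^{-1}$. The only mildly delicate step is confirming that the Hahn--Banach extension is positive, but as noted above this is immediate from $\tilde L(\1_A) = 1 = \|\tilde L\|$, so the proof is essentially routine once the functional $L$ is set up correctly.
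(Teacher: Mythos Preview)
Your proof is correct and follows essentially the same approach as the paper: define the functional on the subspace $\{g\comp f:g\in C(K)\}$, extend by Hahn--Banach with norm preserved, and invoke Riesz representation. Your write-up is in fact more complete, since you spell out well-definedness, the positivity of the extension from $\tilde L(\1_A)=1=\|\tilde L\|$, and the final verification that $\nu=\mu\comp f^{-1}$, all of which the paper leaves implicit.
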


\begin{proof}[\textbf{Proof}]
Define a linear functional $\Psi$ on the closed subspace $\Upsilon = \{\phi \circ f \vert \phi \in \mathcal{C}(K) \}$ of $\mathcal{C}(A)$ by
$\Psi(\phi\circ f) = \int_K \phi \, d\nu$.
Clearly, $\|\Psi\|=1$.  By the Hahn-Banach theorem, $\Psi$ can be extended to a linear functional $\tilde{\Psi}$ on $\mathcal{C}(A)$ of norm 1.
Since $\tilde{\Psi}(1)=1$, by the Riesz-representation theorem, there exists a probability measure $\mu$ on $A$ such that $\tilde{\Psi}(g) = \int_A g \, d\mu$, for all $g \in \mathcal{C}(A)$.
\end{proof}

\begin{proof}[\textbf{Proof of Theorem \ref{upperbound}}]
We will first show that $\dim (B+f)(A) \geq (2\dim A) \wedge d$ a.s.
Let $\alpha < \dim A \wedge \frac{d}{2}$. Then by Theorem~\ref{thm:dimcap}, there exists a probability measure $\mu$ on $A$ with finite $\alpha$-energy, i.e. $\mathcal{E}_{\alpha}(\mu) < \infty$. We now define a random measure $\tilde{\mu}$ on $(B+f)(A)$ given by $\tilde{\mu}(D) = \mu((B+f)^{-1}(D))$, for any $D\subset(B+f)(A)$. Thus if we show that
\[
\E(\mathcal{E}_{2\alpha}(\tilde{\mu})) = \E \int  \int \frac{d\tilde{\mu}(x) \,d\tilde{\mu}(y)}{|x-y|^{2\alpha}} = \E \int _{0}^{1} \int_{0}^{1} \frac{d\mu(s) \,d\mu(t)}{|B_t + f(t) - B_s - f(s)|^{2\alpha}} < \infty,
\]
then using Theorem~\ref{thm:dimcap} we will conclude that $\mathrm{dim}(B+f)(A) \geq 2 \alpha$ a.s.\ and hence by letting
$\alpha \uparrow \dim A \wedge \frac{d}{2}$ we will get the result.

Applying Fubini we get that
\[
\E(\mathcal{E}_{2\alpha}(\tilde{\mu})) = \int_{0}^{1} \int_{0}^{1} \E \left(\frac{1}{|B_t + f(t) - B_s - f(s)|^{2\alpha}}\right) \,d\mu(s) \,d\mu(t),
\]
so we need to estimate the expectation appearing in the last integral.

Take $t > s$, and write $\beta = \sqrt{t-s}$ and $u = f(s) - f(t)$. Then we want to evaluate
\begin{align*}
\E\frac{1}{|\beta B_1 - u|^{2\alpha}} = \int_{\R^d} \frac{1}{(2 \pi)^{\frac{d}{2}}} \frac{e^{-\frac{|x|^2}{2}}}{|\beta x -u|^{2\alpha}}\,dx = \frac{1}{|\beta|^{2\alpha}} \int_{\R^d} \frac{1}{(2 \pi)^{\frac{d}{2}}} \frac{e^{-\frac{|x|^2}{2}}}{|x -\frac{u}{\beta}|^{2\alpha}}\,dx.
\end{align*}
Define $g(x) = e^{-\frac{|x|^2}{2}} $ and $h(x) = \frac{1}{|x|^{2\alpha}}$. They are both decreasing functions of $|x|$ and hence using that
for all $w \in \R^d$ we have that
\begin{align}\label{eq:cheb}
\int_{\R^d} (g(x) - g(x - w)) (h(x) - h(x - w)) \,dx \geq 0,
\end{align}
we get that
\begin{align}
\label{integ}
\frac{1}{|\beta|^{2\alpha}}  \int \frac{1}{(2 \pi)^{\frac{d}{2}}} \frac{e^{-\frac{|x|^2}{2}}}{|x -\frac{u}{\beta}|^{2\alpha}}\,dx \leq \int \frac{1}{(2 \pi)^{\frac{d}{2}}} \frac{e^{-\frac{|x|^2}{2}}}{|\beta x|^{2\alpha}}\,dx.
\end{align}
Hence we obtain that
\[
\E \frac{1}{|B_t + f(t) - B_s - f(s)|^{2\alpha}} \leq \E \frac{1}{|B_t - B_s|^{2\alpha}}
\]
and thus
\[
\E(\mathcal{E}_{2\alpha}(\tilde{\mu})) \leq \E \int _{0}^{1} \int_{0}^{1} \frac{\,d\mu(s) \,d\mu(t)}{|B_t - B_s|^{2\alpha}}.
\]
The rest of the proof follows in the same way as the proof of McKean's theorem (see for instance \cite[Theorem~4.33]{MP}).

We will now show that $\dim (B+f)(A) \geq \dim f(A)$ a.s. Let $\alpha < \dim f(A)\leq d$.
Then again by Theorem~\ref{thm:dimcap}, there exists a probability measure $\nu$ on $K=f(A)$ with finite $\alpha$-energy, that is $\mathcal{E}_\alpha(\nu) < \infty$.
By Lemma~\ref{hahnbanach}, we can find a probability measure $\tilde{\nu}$ on $A$ such that $\nu = \tilde{\nu} \circ f^{-1}$. Then
\begin{align}
\label{energy}
\int_K \int_K \frac{\,d\nu(x) \,d\nu(y)}{|x-y|^\alpha} = \int_{0}^{1} \int_{0}^{1} \frac{\,d\tilde{\nu}(s) \,d\tilde{\nu}(t)}{|f(s)-f(t)|^\alpha} < \infty.
\end{align}
We finally define a random measure $\mu'$ on $K_f = (B+f)(A)$ by $\mu'(D) = \tilde{\nu}((B+f)^{-1}(D))$, for any $D \subset K_f$.
We want to show that
\[
\E(\mathcal{E}_\alpha(\mu')) < \infty,
\]
and hence using the energy method again we will deduce that $\dim (B+f)(A) \ge \alpha$ a.s.
Finally, just as before, letting $\alpha \uparrow \dim f(A)$ we will obtain that almost surely $\dim (B+f)(A) \geq \dim f(A)$.
\newline
We now have that
\begin{align*}
\E(\mathcal{E}_\alpha(\mu')) = \E \int _{K_f} \int_{K_f} \frac{\,d\mu'(x) \,d\mu'(y)}{|x-y|^\alpha} = \E \int _{0}^{1} \int_{0}^{1} \frac{\,d\tilde{\nu}(s) \,d\tilde{\nu}(t)}{|B_s + f(s) - B_t - f(t)|^\alpha}.
\end{align*}
Take $s>t$ and set $\beta = \sqrt{s-t}$ and $w=f(s)-f(t)$.
\begin{align*}
\E\frac{1}{|B_s + f(s) - B_t - f(t)|^\alpha} = \E\frac{1}{|\beta B_1 + w|^\alpha} = \E \frac{1}{\beta ^\alpha|B_1 + \frac{w}{\beta}|^\alpha} = \frac{1}{\beta^\alpha} \int \frac{1}{(2\pi)^{\frac{d}{2}}|x + \frac{w}{\beta}|^\alpha}  e^{-\frac{|x|^2}{2}}\,dx.
\end{align*}
Let $u= \frac{w}{\beta}$. Then
\begin{align*}
\int_{\R^d} \frac{1}{|x + u|^\alpha}  e^{-\frac{|x|^2}{2}}\,dx = \int_{|x+u|\geq \frac{|u|}{2}}\frac{1}{|x + u|^\alpha}  e^{-\frac{|x|^2}{2}}\,dx + \int_{|x+u|< \frac{|u|}{2}}\frac{1}{|x + u|^\alpha}  e^{-\frac{|x|^2}{2}}\,dx
\\ \leq
\frac{c_1}{|u|^{\alpha}} + e^{\frac{-|u|^2}{4}} \int _{|x|< |u|} \frac{1}{|x|^\alpha} \,dx = \frac{c_1}{|u|^{\alpha}} + c_2 e^{\frac{-|u|^2}{4}} \int_{0}^{|u|} \frac{r^{d-1}}{r^\alpha}\,dr \leq \frac{c_3}{|u|^{\alpha}},
\end{align*}
since $d>\alpha$. Hence,
\begin{align*}
\E\frac{1}{|B_s + f(s) - B_t - f(t)|^\alpha} \leq \frac{C}{|f(t)-f(s)|^\alpha}
\end{align*}
and thus using \eqref{energy} we deduce that
\[
\E(\mathcal{E}_\alpha(\mu')) < \infty.
\]
\end{proof}


\begin{remark}
\rm{
We note that when $f$ is a $\gamma$-H\"older continuous function, then $\dim f(A)\leq \frac{1}{\gamma} \dim A$ and similarly
for Minkowski dimension. Hence using the a.s.\ $\alpha$-H\"older continuity property of $B$ for any $\alpha<\frac{1}{2}$, we deduce that $\mathrm{dim}(B+f)(A) \leq \max\{2,\frac{1}{\gamma}\} \mathrm{dim}A$.
We also note that for any function $f$ we have that almost surely
\begin{align}
\label{dimineq}
\mathrm{dim}(B+f)(A) \leq \mathrm{dim}_M B(A) + \mathrm{dim}f(A) \leq 2 \mathrm{dim}_M A + \mathrm{dim}f(A),
\end{align}
where $\dim_M$ denotes the Minkowski dimension. The first inequality follows from \cite[Theorem~8.10]{Matilla} and
the second from the H\"older property of Brownian motion.
}
\end{remark}

\begin{remark}
\rm{
We note that the lower bound for the Hausdorff dimension of the image of $B+f$ given in Theorem~\ref{upperbound} is sharp in some cases. For example let $X$ be an independent fractional Brownian motion in $d\geq 2$ dimensions of Hurst index $\alpha$. Then it is $\beta$-H\"older continuous for any $\beta < \alpha$ and it is known that almost surely $\dim(\mathrm{Im}X) = \frac{1}{\alpha}\wedge d$ (\cite[Chapter 18]{Kahane}).
Therefore $\dim(\mathrm{Im}(B+X)) = \max\{2,\frac{1}{\alpha}\wedge d\}$ a.s.}
\end{remark}

We now give an example where the lower bound from Theorem~\ref{upperbound} is not sharp and where the upper bound given in \eqref{dimineq} is almost sharp.

\begin{example}
\rm{
Let $B$ be a standard Brownian motion in 3 dimensions and let $f(t)=(f_1(t),0,0)$, where $f_1$ is a fractional Brownian motion independent of $B$ of Hurst index $\alpha$. Then $\mathrm{dim}f[0,1] = 1$ a.s. For $\alpha$ small, we have that almost surely
$\dim (B+f)[0,1] = 3-2\alpha$, which is a special case of \cite[Theorem~1]{Cuzick}.
}
\end{example}

So far we have obtained bounds for the Hausdorff dimension of the image of $B+f$. Similar results to the ones proved for the image hold also for the graph and the proofs use the same methods as before. We will only point out the parts where they differ.

\begin{proof}[\textbf{Proof of Theorem \ref{graphdim} (a)}]
To show that $\dim G(B+f) \geq \frac{3}{2}$, we will adapt the proof of \cite[Theorem~4.29~a]{MP}. Let $\alpha < \frac{3}{2}$. Define a random measure $\mu$ on the graph of $B+f$ by
\[
\mu(A) = \mathcal{L}(\{0\leq t \leq 1: (t,B_t+f(t)) \in A\}).
\]
We want to show that this measure has finite $\alpha$-energy, i.e. that
\[
\E \int  \int \frac{\,d\mu(x) \,d\mu(y)}{|x-y|^\alpha} = \E \int _{0}^{1} \int_{0}^{1} \frac{\,ds \,dt}{(|t-s|^2 + |B_t + f(t) - B_s - f(s)|^2)^\frac{\alpha}{2}} < \infty.
\]
Using again \eqref{eq:cheb} as in the proof of Theorem \ref{upperbound} we get that
\[
\E \frac{1}{(|t-s|^2 + |B_t + f(t) - B_s - f(s)|^2)^\frac{\alpha}{2}} \leq \E\frac{1}{(|t-s|^2 + |B_t - B_s|^2)^{\frac{\alpha}{2}}}.
\]
Now the rest of the proof follows just as in \cite[Theorem 4.29 a]{MP}.

It remains to show that $\dim G(B+f) \geq \dim G(f)$. Let $\alpha < \dim G(f)$. Then there exists a measure $\nu$ on $G(f)$ such that $\mathcal{E}_\alpha(\nu) < \infty$.  Then there exists a measure $\mu$ on $[0,1]$ such that
\[
\nu(A) = \mu(\{t: (t,f(t)) \in A\}).
\]
Next we define a random measure $\mu'$ on $G(B+f)$ by $\mu'(A) = \mu(\{t: (t,B_t + f(t)) \in A\})$. We will show that \newline $\E(\mathcal{E}_\alpha(\mu'))< \infty$, i.e.
\[
\E \int _{0}^{1} \int_{0}^{1} \frac{\,d\mu(s) \,d\mu(t)}{(|t-s|^2+ |B_t + f(t) - B_s - f(s)|^2)^{\frac{\alpha}{2}}} < \infty.
\]
Using the same arguments as in the proof of Theorem \ref{upperbound} we get that
\[
\E \frac{1}{(|t-s|^2 + |B_t + f(t) - B_s - f(s)|^2)^{\frac{\alpha}{2}}} \leq \frac{C}{(|t-s|^2+|f(t) - f(s)|^2)^{\frac{\alpha}{2}}}
\]
and thus the result follows since $\E (\mathcal{E}_{\alpha}(\nu)) < \infty$.
\end{proof}

\begin{proof}[\textbf{Proof of Theorem \ref{graphdim} (b)}]
We have that almost surely $\dim G(B+f) \geq 2$ from Theorem \ref{upperbound}, since the dimension of the graph is always bigger than the dimension of the image. We only need to show that it is greater than the dimension of the graph of $f$. To prove that we use the energy method just as in the proof for the case $d=1$.
\end{proof}

\section*{Question}
Can the zero-one law (Theorem~\ref{bmdrift}) be extended to Gaussian processes $X$ with dependent increments, e.g. fractional Brownian
motion or the Gaussian Free Field in a domain $U\subset \R^d$?
For example, does $\{\mathcal{L}(X+f)(A)>0\}$ satisfy a 0-1 law?

\section*{Acknowledgements}
We thank Ton\'ci Antunovi\'c and Brigitta Vermesi for helpful comments.


\begin{thebibliography}{99}

\bibitem{ABP}{Ton\'ci Antunovi\'c, Krzysztof Burdzy, Yuval Peres, Julia Ruscher} (2010).
Isolated zeros for Brownian motion with variable drift
avalaible at http://arxiv.org/abs/1009.3603

\bibitem{Tonci}{Ton\'ci Antunovi\'c, Yuval Peres, Brigitta Vermesi} (2010).
Brownian motion with variable drift can be space-filling.
available at http://arxiv.org/abs/1003.0228


\bibitem{BPP}{Itai Benjamini, Robin Pemantle and Yuval Peres} (1995).
Martin capacity for Markov chains.
{\em Ann. Probab.} {\bf 23}, no. 3, 1332--1346.

\bibitem{Cuzick}{Jack Cuzick} (1978).
Some local properties of Gaussian vector fields.
{\em Ann. Probab.} {\bf 6}, no. 6, 984--994

\bibitem{SE}{Steven N.\ Evans} (1989).
Perturbation of functions by the paths of a Lιvy process.
{\em Math. Proc. Cambridge Philos. Soc.} {\bf 105} , no. 2, 377-380.

\bibitem{LeGall}{Jean-Fran\c{c}ois Le Gall} (1988).
Sur les fonctions polaires pour le mouvement brownien.
{\em S\'{e}minaire de Probabilit\'{e}s},
XXII,  186--189, Lecture Notes in Math., 1321, Springer, Berlin.

\bibitem{Graversen}{S.E.\ Graversen} (1982).
Polar functions for Brownian motion
{\em Z. Wahrsch. Verw. Gebiete} {\bf 61}, no. 2, 261--270.

\bibitem{Kahane}{Jean-Pierre Kahane} (1985).
\newblock{\em Some random series of functions},
\newblock Cambridge University Press.

\bibitem{Koshnevisan}{Davar Khoshnevisan} (1999).
Brownian sheet images and Bessel-Riesz capacity.
{\em  Trans. Amer. Math. Soc.} {\bf 7}, 2607--2622.

\bibitem{Kono}{Norio K\^ono} (1978).
Double Points of a Gaussian Sample Path.
{\em Z. Wahrscheinlichkeitstheorie verw. Gebiete} {\bf 45}, 175--180.

\bibitem{Matilla}{Pertti Matilla} (1995).
\newblock{\em Geometry of Sets and Measures in Euclidean Spaces \\Fractals and rectifiability,}
\newblock Cambridge University Press.

\bibitem{MP}{Peter M\"orters and Yuval Peres} (2010).
\newblock{\em Brownian motion,}
\newblock Cambridge Series in Statistical and Probabilistic Mathematics, Volume 30.

\end{thebibliography}
\end{document}